
\documentclass[12pt]{amsart}
\headheight=8pt     \topmargin=0pt
\textheight=624pt   \textwidth=432pt
\oddsidemargin=18pt \evensidemargin=18pt

\usepackage{amssymb}
\usepackage{verbatim}

\begin{document}

\newtheorem{theorem}{Theorem}
\newtheorem{lemma}[theorem]{Lemma}
\newtheorem{corollary}[theorem]{Corollary}
\newtheorem{proposition}[theorem]{Proposition}

\newtheorem{maintheorem}{Main Theorem}
\def\themaintheorem{\unskip}

\theoremstyle{definition}
\newtheorem{remark}{Remark}
\def\theremark{\unskip}
\newtheorem{claim}[remark]{Claim}
\newtheorem{definition}{Definition}
\def\thedefinition{\unskip}
\newtheorem{problem}{Problem}[section]

\numberwithin{equation}{section}

\def\Re{\operatorname{Re\,} }
\def\Im{\operatorname{Im\,} }
\def\distance{\operatorname{distance\,} }
\def\domain{\operatorname{Domain\,} }
\def\e{\varepsilon}
\def\eps{\varepsilon}
\def\p{\partial}

\def\reals{ {{\mathbb R}} }
\def\complex{ {{\mathbb C}} }
\def\torus{ {{\mathbb T}} }
\def\naturals{ {{\mathbb N}} }
\def\integers{ {{\mathbb Z}} }
\def\complex{{\mathbb C}}

\def\scriptd{ {\mathcal D} }
\def\scriptc{ {\mathcal C} }
\def\scripte{ {\mathcal E} }

\def\scriptb{ {\mathcal B} }
\def\scriptt{{\mathcal T}}
\def\scriptf{{\mathcal F}}
\def\scriptg{{\mathcal G}}
\def\scriptv{{\mathcal V}}
\def\scriptl{{\mathcal L}}
\def\scriptn{{\mathcal N}}
\def\scriptm{{\mathcal M}}
\def\lbarlambda{\overline{{\mathcal L}}_\lambda}
\def\dbarlambda{\overline{D}_\lambda}
\def\dbarlambdastar{\overline{D}_\lambda^*}
\def\dbarb{\bar\partial_b}
\def\dbar{\bar\partial}
\def\op{\operatorname{op}}
\def\lt{L^2}
\newcommand{\norm}[1]{ \|  #1 \| }
\newcommand{\Norm}[1]{ \Big\|  #1 \Big\| }
\newcommand{\set}[1]{ \left\{ #1 \right\} }

\author{Michael Christ}
\address{
        Michael Christ\\
        Department of Mathematics\\
        University of California \\
        Berkeley, CA 94720-3840, USA}
\email{mchrist@math.berkeley.edu}
\thanks{The author was supported in part by NSF grant DMS-0901569.}

\date{June 17, 2013}

\title[Upper bounds for Bergman kernels] 
{Upper bounds for Bergman kernels 
\\ associated to  positive Hermitian line bundles \\ with smooth metrics}  

\begin{abstract}
Off-diagonal upper bounds are established away from the diagonal for the Bergman kernels
associated to high powers $L^\lambda$ of holomorphic line bundles $L$ over compact complex manifolds,
asymptotically as the power $\lambda$ tends to infinity. The line bundle
is assumed to be equipped with a  Hermitian metric with positive curvature form, which 
is $C^\infty$ but not necessarily real analytic. The bounds are of the form $\exp(-h(\lambda)\sqrt{\lambda\log\lambda})$
where $h\to\infty$ at a non-universal rate.  This form is best possible.
\end{abstract}

\maketitle

\section{Introduction}

\subsection{The setting}
Let $X$ be a compact complex manifold, without boundary. Let $X$ be equipped with a $C^\infty$ Hermitian metric $g$,
along with the metrics on the bundles $B^{(p,q)}(X)$ of forms of bidegree $(p,q)$ induced by $g$,
and the volume form on $X$ associated to the induced Riemannian metric.
Denote by $\rho(z,z')$ the Riemannian distance from $z\in X$ to $z'\in X$.

Let $L$ be a positive holomorphic line bundle over $X$.  Let $L$ be equipped with a $C^\infty$ Hermitian metric $\phi$ 
whose curvature is positive at every point. $\phi$ is not assumed to be real analytic. 

For each positive integer $\lambda$, let the line bundle $L^\lambda$ be the tensor product of $\lambda$ copies of $L$.
$L^\lambda$ inherits from $\phi$ a Hermitian metric in a natural way; if $v\in L_z$ then
the $\lambda$--fold tensor product $v\otimes v\otimes \cdots\otimes v$ satisfies
$|v\otimes v\otimes \cdots\otimes v| = |v|^\lambda$.

Let $L^2_\lambda= L^2(X,L^\lambda)$ be the Hilbert space of equivalence classes of all square integrable Lebesgue measurable
sections of $L^\lambda$. 
Likewise there are the Hilbert spaces $L^2(X,B^{(0,q)}\otimes L^\lambda)$.
Let $H^2_\lambda$ be the closed subspace of $L^2_\lambda$ consisting of all holomorphic sections.  
The Bergman projection is defined to be the orthogonal projection $B_\lambda$ from $L^2_\lambda$ onto $H^2_\lambda$.
The Bergman kernel $B_\lambda(z,z')$ is the associated distribution-kernel; 
$B_\lambda(z,z')$ is a complex linear endomorphism from the fiber $L^\lambda_{z'}$ to the fiber $L^\lambda_z$.

A great deal is known concerning the nature of these Bergman kernels. 
In particular, detailed asymptotic expansions are known near the diagonal $z=z'$,
that is, when $\rho(z,z')$ is bounded by a constant multiple of $\lambda^{-1/2}$.
See for instance \cite{bermanBS},\cite{catlin},\cite{tian},\cite{zelditch}
as well as the related work \cite{boutetsjostrand} of Boutet de Monvel and Sj\"ostrand on the
Bergman and Szeg\"o kernels associated to domains in $\complex^{n+1}$. 
This paper is concerned with upper bounds when $z,z'$ are far apart, that is,
behavior for large $\lambda$ when $\rho(z,z')$ is bounded below by a positive
quantity independent of $\lambda$. If $\phi$ and $g$ are real analytic, then for large $\lambda$, 
$|B_\lambda(z,z')|\le C_\delta e^{-c_\delta\lambda}$ whenever $\rho(z,z')\ge\delta>0$, 
where $C_\delta<\infty$ and $c_\delta>0$ are independent of $\lambda$.
This is interpreted in the theory of Bleher, Shiffman and Zelditch \cite{BSZ1},\cite{BSZ2},\cite{shiffmanzelditch}
of random zeroes of sections of $L^\lambda$ as an exponentially small upper
bound on the degree of correlation between zeros at distinct points. 

\subsection{Subexponential off-diagonal decay}
It was shown in \cite{christcounter} that this exponential decay fails to hold, in general,
if $\phi$ is merely infinitely differentiable. More quantitatively,
for any function $h$ satisfying $h(t)\to\infty$ as $t\to+\infty$ there exists \cite{christcounter} an example for which 
\begin{equation}
\limsup_{\lambda\to\infty} \sup_{\rho(z,z')\ge\delta} e^{h(\lambda)\sqrt{\lambda\log\lambda}}\,{|B_\lambda(z,z')|}=\infty
\end{equation}
for all $\delta>0$.
In this paper we establish an upper bound which dovetails with these lower bounds.

\begin{theorem}\label{thm:main}
Let $L$ be a positive holomorphic line bundle over a compact complex manifold $X$.
Let there be given a $C^\infty$ positive metric on $L$ with strictly positive curvature form,
and a $C^\infty$ Hermitian metric on $X$.
For any $\delta>0$ there exist $\Lambda<\infty$ and a function $h$ satisfying
$h(\lambda)\to\infty$ as $\lambda\to\infty$
such that for all $z,z'\in X$ satisfying $\rho(z,z')\ge\delta$,
\begin{equation}
|B_\lambda(z,z')|\le e^{-h(\lambda)\sqrt{\lambda\log\lambda}}\ \text{ for all $\lambda\ge\Lambda$.}
\end{equation}
\end{theorem}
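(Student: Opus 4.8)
The plan is to reduce the off‑diagonal bound to a statement about the concentration of the Bergman reproducing kernels, and then to establish that concentration by transplanting the sharp exponential decay available for real‑analytic metrics, working at a scale chosen as a function of $\lambda$.

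\emph{Reduction.} Fix $z'\in X$ and a unit vector $v'\in L^\lambda_{z'}$, and let $K\in H^2_\lambda$ be the reproducing section, so that $B_\lambda(z,z')v'=K(z)$ and $\|K\|^2=\langle B_\lambda(z',z')v',v'\rangle\asymp\lambda^n$ by the standard on‑diagonal asymptotics. Applying the sub–mean–value inequality for holomorphic sections on the ball $B(z,\lambda^{-1/2})$, which for large $\lambda$ is disjoint from $B(z',\delta/2)$ whenever $\rho(z,z')\ge\delta$, one gets $|B_\lambda(z,z')|^2\le C\lambda^{2n}\,\mathcal E(z',v')$, where $\mathcal E(z',v'):=\|K\|^{-2}\int_{\rho(\cdot,z')\ge\delta/2}|K|^2e^{-\lambda\phi}$ is the ``escaped fraction'' of $K$. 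Hence it suffices to produce $h\to\infty$ with $\mathcal E(z',v')\le e^{-4h(\lambda)\sqrt{\lambda\log\lambda}}$ uniformly in $(z',v')$.

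\emph{Analytic comparison metrics.} Since $\phi\in C^\infty(X)$ and $X$ is compact, for each small $r>0$ one can construct a real‑analytic Hermitian metric $\phi_r$ on $L$ with: (i) $\|\phi-\phi_r\|_{C^2}\to0$ as $r\to0$, so $\phi_r$ has curvature $\ge\tfrac12 c_0\omega$ for $r$ below a fixed threshold; (ii) the local potentials of $\phi_r$ extend holomorphically to a tube of width $r$, bounded there uniformly in $r$; (iii) $\|\phi-\phi_r\|_{C^0}=:\eta(r)$ decays faster than any power of $r$. (One may truncate the Fourier/Taylor expansions of local potentials at degree $\sim r^{-1}$ and glue; (iii) is the rapid decay of the polynomial approximation numbers of a $C^\infty$ function.) For such a metric the Bergman kernel decays off the diagonal at the exponential rate set by $r$: the escaped fraction of the $\phi_r$‑reproducing kernels past distance $\delta/2$ is at most $Ce^{-c\lambda r^2}$, with $c,C$ depending only on $\delta,c_0,n$ and the bound in (ii). This is the familiar fact that in the analytic category the near‑diagonal Gaussian decay $e^{-c\lambda\rho^2}$ persists out to $\rho\sim r$ and then saturates; I would prove it by a Hörmander‑type weighted $\bar\partial$‑estimate whose plurisubharmonic comparison weight is assembled from the holomorphic extension of $\phi_r$ (alternatively, from the diastasis). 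The correspondence $r\mapsto\eta(r)$ is rapidly decreasing but \emph{not} universal, and this is precisely where the dependence of $h$ on the particular metric will enter.

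\emph{Transfer.} The crux is to deduce the bound on $\mathcal E$ for $\phi$ from the analytic bound for $\phi_r$ while the two metrics differ only by $\eta(r)$ in $C^0$. A direct comparison of weighted $L^2$ norms is hopelessly lossy off the diagonal, since it inserts a factor $e^{O(\lambda\eta(r))}$ before any smallness is extracted. Instead I would pass through the variational characterization of $K$: truncating $K$ near $z'$ at a dyadic scale and repairing $\bar\partial$ by a Hörmander solution in the $\phi_r$‑metric produces, at each scale, a competitor whose norm is controlled by the escaped mass of $K$ at the previous scale; comparison with the genuine $\phi_r$‑extremizer forces a quantitative gain per dyadic step. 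Iterating over the $\sim\log\lambda$ dyadic scales between $\lambda^{-1/2}$ and $\delta/2$, and choosing $r=r(\lambda)\to0$ so that $\lambda\eta(r(\lambda))$ stays negligible against the accumulated gain at \emph{every} scale, one arrives at $\mathcal E(z',v')\le C\lambda^{O(1)}e^{-c\lambda r(\lambda)^2}$; since $\eta$ is rapidly decreasing one may take $r(\lambda)$ tending to $0$ more slowly than any power of $\lambda^{-1}$, whence $\lambda r(\lambda)^2\gg\sqrt{\lambda\log\lambda}$, and after absorbing the polynomial prefactors this yields the theorem with a suitable $h\to\infty$.

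The main obstacle I expect is exactly this last step — running the $\bar\partial$‑corrections through a whole hierarchy of scales while keeping the approximation error $\lambda\eta(r)$ strictly below the Gaussian gain at each scale, so that the errors do not accumulate. The scale $\sqrt{\lambda\log\lambda}$ is the output of this balance: the $\log\lambda$ counts the dyadic scales traversed and $\lambda^{1/2}$ is the base Gaussian rate at the concentration scale $\lambda^{-1/2}$. The impossibility of a universal $h$ (the sharpness recorded by the counterexamples) corresponds to the fact that $r(\lambda)\to0$ at a rate tied to the modulus of smoothness of $\phi$, with no bound valid for all $C^\infty$ metrics at once.
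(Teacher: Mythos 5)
Your reduction to the escaped mass of the reproducing kernel is fine, and your description of what analyticity buys at scale $r$ (Gaussian decay $e^{-c\lambda\rho^2}$ persisting to $\rho\sim r$ and saturating at $e^{-c\lambda r^2}$) is consistent with what the paper proves. The genuine gap is in the quantitative accounting of the Transfer step, and it is fatal under either reading of your constraint on $r(\lambda)$. If ``$\lambda\eta(r(\lambda))$ negligible'' means merely $\lambda\eta(r(\lambda))=o(1)$ (i.e.\ norm comparability of the weights $e^{-\lambda\phi}$ and $e^{-\lambda\phi_r}$), then your conclusion $\mathcal{E}\le\lambda^{O(1)}e^{-c\lambda r(\lambda)^2}$ with $r(\lambda)$ tending to $0$ more slowly than any power of $\lambda^{-1}$ would give, for \emph{every} $C^\infty$ metric, decay of order $e^{-\lambda^{1-\varepsilon}}$ for each $\varepsilon>0$; this contradicts the counterexamples of \cite{christcounter} quoted in the introduction, which produce smooth metrics violating $e^{-h(\lambda)\sqrt{\lambda\log\lambda}}$ for any prescribed slowly growing $h$. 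The reason norm comparability is not enough is that the off-diagonal kernel (equivalently the escaped mass) is an exponentially small quantity, not stable under perturbations that only preserve norms up to constants: the mismatch between the $\phi$- and $\phi_r$-extremizers/reproducing kernels is of relative size about $\lambda\eta(r)$ and enters \emph{additively} at the level of the quantity you are estimating, so an honest version of your truncate-and-repair iteration needs $\lambda\eta(r)$ to lie below the target $e^{-c\lambda r^2}$, not merely below $1$. But under that stronger reading the final step is a non sequitur: rapid decrease of $\eta$ (faster than any power of $r$, e.g.\ $\eta(r)=e^{-(\log 1/r)^2}$ for some smooth metrics) only permits $\lambda r^2\lesssim\log(1/\eta(r))$, which can be as small as $(\log\lambda)^2$ --- far short of $\sqrt{\lambda\log\lambda}$. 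So the scheme either proves something false or something much weaker than the theorem; the dyadic iteration as sketched supplies no mechanism that composes per-scale gains, since the escaped-mass functional, unlike an operator norm, does not multiply under concatenation of scales.

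This is precisely where the paper's structure differs and why it lands exactly at $\sqrt{\lambda\log\lambda}$. Instead of one global analytic approximant, it Taylor-expands $\phi$ at the pole to a fixed degree $N$ and rescales by $r\le\lambda^{-1/4}$, so the approximation error is $O(r^{N})$ --- superpolynomially small in $\lambda$, with the threshold $\Lambda(N)$ absorbing the $N$-dependence --- and uniform analytic hypoellipticity of the Kohn Laplacian (Lemma~\ref{lemma:four}) then yields the Gaussian bound for $\scriptg_\lambda$ out to $|z-z'|\sim A\lambda^{-1/2}\sqrt{\log\lambda}$ with $N=A^2$ (Proposition~\ref{prop:neardiagonal}); the balance $e^{-c\lambda r^2}$ versus $\lambda^{-cN}$ is exactly what caps the range at $\sqrt{\log\lambda}$ times the natural scale. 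The passage to distance $\delta$ is then done at the level of $G_\lambda=\square_\lambda^{-1}$: truncating its kernel gives a parametrix $P$ with $\norm{I-\square_\lambda P}_{\op}\le\lambda^{C-cA^2}$, and the Neumann sum $T=P\sum_{j<N}E^j$, with kernel supports adding (Lemmas~\ref{lemma:addsupports}, \ref{lemma:two}), converts that polynomially small per-step error into $e^{-cA\sqrt{\lambda\log\lambda}}$ after $\asymp\delta A^{-1}\lambda^{1/2}(\log\lambda)^{-1/2}$ compositions; letting $A\to\infty$ with $\lambda$ gives $h$. Your proposal has no analogue of this composable propagation, and that --- together with the misaccounted error/gain balance above --- is the missing idea. (Secondarily: building the global real-analytic, positively curved approximants by truncating expansions ``and gluing'' does not work as stated, since cutoffs destroy real-analyticity; this construction would itself need a different argument.)
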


The analysis below of $B_\lambda$ is based on its connection with the fundamental solution of
a partial differential operator, $\square_\lambda$.
Denote by $\dbar_\lambda$ the usual Dolbeault operator, mapping sections
of $B^{(0,q)}\otimes L^\lambda$ to sections of $B^{(0,q+1)}\otimes L^\lambda$.
Denote by $\dbar_\lambda^*$ its formal adjoint, with respect to the Hilbert space structures $L^2_\lambda$ defined above.
Define
\begin{equation}
\square_\lambda = 
\begin{cases}
\dbar_\lambda^*\dbar_\lambda + \dbar_\lambda\dbar_\lambda^*\qquad & \text{for $n>1$}
\\
 \dbar_\lambda\dbar_\lambda^* &\text{for $n=1$},
\end{cases}
\end{equation}
acting on sections of $B^{(0,1)}\otimes L^\lambda$.
For each $\lambda$, $\square_\lambda$ is an elliptic second-order linear system of partial differential
operators with $C^\infty$ coefficients. When it is expressed in local coordinates, its coefficients
are $O(\lambda^2)$ in any $C^N$ norm.

Because the metric $\phi$ is positive, there exists a constant $c>0$ such that
for all sufficiently large $\lambda\in\naturals$,
\begin{equation} \label{formlowerbound} \langle \square_\lambda u,u\rangle \ge c\lambda\norm{u}_{L^2}^2 \end{equation}
for all twice continuously differentiable sections $u$ of $B^{(0,1)}\otimes L^\lambda$.
This bound is deduced from a well-known integration by parts calculation \cite{hormander}.
Because of this lower bound and because $\square_\lambda$ is formally self-adjoint and elliptic,
there exists a unique self-adjoint bounded linear operator $G_\lambda$ on 
$L^2(X,B^{(0,1)}\otimes L^\lambda)$ satisfying $\square_\lambda\circ G_\lambda=I$, the identity operator.


The operator $B_\lambda$ is related to $\square_\lambda$ by 
\begin{equation}\label{szegoformula} 
B_\lambda = I-\dbar_\lambda^*\circ G_\lambda\circ\dbar_\lambda.  \end{equation}
Thus the Bergman kernel is expressed in terms of certain derivatives of the distribution-kernel
for the operator $G_\lambda$. We denote this distribution-kernel by $G_\lambda(z,z')$.
Because $G_\lambda(z,z')$ is a solution of $\square_\lambda G_\lambda=0$ with respect to
the variable $z$ and its complex conjugate is a solution of the same equation with respect to $z'$, 
elliptic regularity theory guarantees that $G_\lambda(z,z')$ is a $C^\infty$
function of $(z,z')$ on the complement of the diagonal.

We will show that $G_\lambda(z,z')=O(e^{-h(\lambda)\sqrt{\lambda\log\lambda}})$
for $(z,z')$ at any positive distance from the diagonal.  The corresponding bound
holds for those partial derivatives that express the distribution-kernel for 
$\dbar_\lambda^*\circ G_\lambda\circ\dbar_\lambda$ at $(z,z')$ will be an easy consequence.

\medskip
For real analytic metrics, the Bergman kernel is $O(e^{-c\lambda})$ away from the diagonal.
Combining the result established here with that of \cite{christcounter}, one knows that
for $C^\infty$ metrics, decay can in some instances be essentially as slow as $e^{-h(\lambda)\sqrt{\lambda\log\lambda}}$,
but is never slower.
Zelditch has raised the question of which, or what, behavior is typical, and whether properties of
the metric can be inferred from the off-diagonal decay rate of the associated Bergman kernels. 
This issue will be examined in \cite{christzeldconjecture}.

\subsection{Orientation}
A weaker upper bound $|B_\lambda(z,z')| \le e^{-c\sqrt{\lambda}}$, valid
whenever $\rho(z,z')\ge\delta$, is a simple consequence of \eqref{formlowerbound},
and requires only $C^2$ or even $C^{1,1}$ regularity of $\phi$.
In the context of global analysis on $\complex^1$, this was shown in \cite{christcounter}.
For positive line bundles over complex manifolds, it was noted by Berndtsson \cite{bob}. 
Closely related results are found in works of Delin \cite{delin} and Lindholm \cite{lindholm}.
The novelty in Theorem~\ref{thm:main} is a double improvement of the exponent, from $c\sqrt{\lambda}$
to $h(\lambda)\sqrt{\lambda\log\lambda}$.

To establish the weaker bound, consider any real-valued auxiliary weight
$\psi\in C^2(X)$. For any $\eps>0$ and all sufficiently large $\lambda$,
\begin{equation}\label{formlowerboundtwisted}  
\begin{aligned}
\Re\big(\langle e^{\eps\sqrt{\lambda}\psi}
\square_\lambda & e^{-\eps\sqrt{\lambda}\psi} u,u\rangle\big) 
\\ &\ge \langle \square_\lambda  u,u\rangle 
-C\lambda^{1/2} \eps \norm{\dbar_\lambda u}\cdot\norm{u}
-C\lambda^{1/2} \eps \norm{\dbar_\lambda^* u}\cdot\norm{u}
-C\lambda\eps^2 \norm{u}^2
\\& \ge (c-C\eps)\lambda\norm{u}_{L^2}^2 \end{aligned} \end{equation}
for all sections $u\in C^2(X,B^{(0,1)})$, where $C$ depends on the $C^2$ norm of
$\psi$. This is $\ge \norm{u}^2$ for all sufficiently
large $\lambda$, provided that $\eps$ is chosen to be sufficiently small as a function of
$\norm{\psi}_{C^2}$. 
The inequality \eqref{formlowerboundtwisted} can alternatively be interpreted as a weighted inequality
for the inverse operator $\square_\lambda^{-1}$, with weight $e^{2\eps\sqrt{\lambda}\psi}$.
Whenever $U,U'$ are disjoint sets satisfying $\distance(U,U')\ge\delta>0$,
by choosing $\psi$ so that $\psi\ge 1$ on $U'$ and $\psi\le 0$ on $U$ 
we conclude that $\square_\lambda^{-1}$ maps $L^2(U)$ to $L^2(U')$, where these norms are defined
without reference to the auxiliary weight $\psi$, with operator norm
$O(e^{-c\sqrt{\lambda}})$ where $c>0$ depends on $\delta$. The pointwise bound for $B_\lambda(z,z')$
for $(z,z')\in U\times U'$ is a simple consequence, by a routine elliptic regularity bootstrapping 
argument which will be used below in the main body of the proof.

\section{Variants}

Here are two variants of Theorem~\ref{thm:main}, concerning metrics $\phi$ with more limited regularity.
For simplicity we continue to assume that the underlying Hermitian/Riemannian metric on $X$ itself is $C^\infty$.

\begin{theorem}\label{thm:two}
Let $n\ge 1$, let $X$ be a compact complex manifold, and let $L$ be a postitive holomorphic line bundle over $X$.
Let $\delta>0$. There exist a positive integer $N_0$ and $\eta>0$
such that for every $N\ge N_0$  and every $C^N$ metric on $L$ with positive curvature
there exists $\Lambda$ such that
\begin{equation}
|B_\lambda(z,z')|\le e^{-\eta\sqrt{N\,\lambda\,\log\lambda}}.
\end{equation}
whenever $\rho(z,z')\ge\delta$ and $\lambda\ge\Lambda$.
\end{theorem}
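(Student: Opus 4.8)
The plan is to reduce the limited-regularity case to the mechanism already sketched for Theorem~\ref{thm:main} by approximation: given a $C^N$ metric $\phi$, one regularizes it to a family $\phi_\tau$ of $C^\infty$ metrics (say by convolution in local coordinates, patched with a partition of unity), keeping track of how the constants in the argument degrade as a function of $\tau$ and $N$. The point is that $\|\phi - \phi_\tau\|_{C^2} = O(\tau^{N-2})$ while $\|\phi_\tau\|_{C^M} = O(\tau^{N-M})$ for $M>N$; choosing $\tau$ as a suitable negative power of $\lambda$ converts a fixed number $N$ of derivatives into a gain of a factor $\sqrt N$ in the exponent, which is exactly the shape of the claimed bound $e^{-\eta\sqrt{N\lambda\log\lambda}}$.

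The key steps, in order, are as follows. First I would record the quantitative form of the weighted estimate: for a $C^\infty$ metric, the bound $|B_\lambda(z,z')|\le e^{-h(\lambda)\sqrt{\lambda\log\lambda}}$ is obtained (in the full proof of Theorem~\ref{thm:main}) by optimizing an auxiliary weight $\psi$ of size roughly $\sqrt{\log\lambda}$ in the twisted inequality \eqref{formlowerboundtwisted}, where the admissible size of $\psi$ is controlled by higher $C^M$ norms of the metric; one must extract from that argument the explicit dependence of $h$ on those norms. Second, I would replace $\phi$ by $\phi_\tau$ with $\tau = \lambda^{-\beta}$ for an exponent $\beta$ to be chosen, so that $\square_\lambda$ for the true metric differs from $\square_\lambda$ for $\phi_\tau$ by an operator whose coefficients are $O(\lambda^2 \tau^{N-2})$ in $C^2$; this perturbation is absorbed into the lower bound \eqref{formlowerbound} provided $\lambda^2\tau^{N-2}=o(\lambda)$, i.e. $\beta(N-2)>1$. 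Third, with that choice the $C^M$ norms of $\phi_\tau$ are $O(\tau^{N-M})=O(\lambda^{\beta(M-N)})$, and feeding this into the $C^\infty$ argument from Step~1 produces a weight whose size, and hence the exponent in the Bergman bound, is proportional to $\sqrt{N}$ times the right power of $\log\lambda$ once $\beta$ is fixed at a value slightly larger than $1/(N-2)$ (e.g. $\beta = 2/N$), yielding the factor $\sqrt{N\lambda\log\lambda}$ with a universal constant $\eta$. Fourth, the pointwise bound for $B_\lambda(z,z')$ itself follows from the $L^2$ operator-norm bound for $\square_\lambda^{-1}$ between $L^2(U)$ and $L^2(U')$ by the same elliptic bootstrapping argument referred to in the Orientation subsection, using that the coefficients of $\square_\lambda$ for the regularized metric are $O(\lambda^2)$ in every fixed $C^K$ norm once $\lambda$ is large.

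The main obstacle will be Step~1 and its interaction with Step~3: one must run the proof of the $C^\infty$ result not as a black box but with explicit, tracked dependence of every constant on a finite number of derivatives of the metric, so that substituting the $\lambda$-dependent regularized metric $\phi_\tau$ — whose high derivatives blow up polynomially in $\lambda$ — still leaves room to push the weight $\psi$ up to size $\sim\sqrt{N\log\lambda}$ without the error terms $C\lambda\eps^2\|u\|^2$ and the commutator terms in \eqref{formlowerboundtwisted} overwhelming the basic coercivity $c\lambda\|u\|^2$. Balancing these competing powers of $\lambda$ (from the number of derivatives taken, the size of the weight, and the regularization scale) to land exactly on the exponent $\sqrt{N\lambda\log\lambda}$, and verifying that $N_0$ and $\eta$ can be chosen uniformly over all $C^N$ metrics with positive curvature and all $N\ge N_0$, is the delicate bookkeeping at the heart of the argument.
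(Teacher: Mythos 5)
There is a genuine gap, and it lies in your Step~1: the claim that the $C^\infty$ bound of Theorem~\ref{thm:main} is obtained by pushing the auxiliary weight $\psi$ in \eqref{formlowerboundtwisted} up to size $\sim\sqrt{\log\lambda}$, with the admissible size of $\psi$ controlled by higher $C^M$ norms of the metric, is not how that inequality works and cannot be made to work. The obstruction to enlarging the weight is the term $C\lambda\eps^2\norm{u}^2$ (together with the commutator terms $C\lambda^{1/2}\eps\norm{\dbar_\lambda u}\norm{u}$), and these come from conjugating the first-order operators $\dbar_\lambda,\dbar_\lambda^*$ by $e^{\eps\sqrt{\lambda}\psi}$; they involve only $\psi$ and the first-order structure of the operators, and are completely insensitive to how many derivatives $\phi$ (or your mollification $\phi_\tau$) possesses. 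To reach an exponent $\eta\sqrt{N\lambda\log\lambda}$ across a fixed distance $\delta$ you would need $\eps\sim\sqrt{N\log\lambda}$, at which point $C\lambda\eps^2\norm{u}^2\sim N\lambda\log\lambda\,\norm{u}^2$ swamps the coercivity $c\lambda\norm{u}^2$ no matter how smooth the metric is; the single-weight argument saturates at $e^{-c\sqrt{\lambda}}$, which is exactly why the paper calls it the ``weaker bound.'' So the balancing you defer to as ``delicate bookkeeping'' is not delicate but impossible along these lines, and the mollification scale $\tau=\lambda^{-\beta}$ never gets a chance to enter.

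The regularity of the metric actually enters the proof elsewhere: in Proposition~\ref{prop:neardiagonal}, where $\phi$ is replaced near the pole by its Taylor polynomial of degree $N$, the rescaled polynomial model is uniformly real analytic, Lemma~\ref{lemma:four} (analytic hypoellipticity of $\square_b$) gives a gain $e^{-c\kappa}=e^{-c\lambda r^2}$, and the Taylor error contributes $O(\lambda^{-cN})$; matching $\lambda^{-cN}$ against $e^{-c\lambda|\zeta|^2}$ on $|\zeta|\le A\lambda^{-1/2}\sqrt{\log\lambda}$ forces $N\gtrsim A^2$, i.e.\ $A\lesssim\sqrt{N}$. The far-field bound then comes not from a global weight but from composing $\sim \delta A^{-1}\lambda^{1/2}(\log\lambda)^{-1/2}$ small-support parametrices (Lemmas~\ref{lemma:addsupports} and~\ref{lemma:two}), each step gaining $\lambda^{-cA^2}$, which multiplies out to $e^{-c A\delta\sqrt{\lambda\log\lambda}}$. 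For a $C^N$ metric no mollification is needed at all: the same argument runs verbatim with Taylor degree at most $N$, which caps $A$ at $c\sqrt{N}$ and yields precisely $e^{-\eta\sqrt{N\lambda\log\lambda}}$ — this is what the paper means by tracing the dependence of the parameter $A$ on $N$. Your regularization would in fact hurt rather than help at this point, since the $C^M$ norms of $\phi_\tau$ for $M>N$ grow like $\lambda^{\beta(M-N)}$ and destroy the uniform-analyticity of the rescaled Taylor model beyond degree $N$, so nothing is gained over using the $C^N$ metric directly.
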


A natural question is whether the indicated dependence on $N$, as $N\to\infty$, is optimal. 
The construction in \cite{christcounter} could be used to obtain a bound in the opposite
direction; we have not reexamined the details to determine whether it shows that
the bound obtained here is optimal.
A proof of Theorem~\ref{thm:two} is implicit in the proof given below of Theorem~\ref{thm:main};
it is simply a matter of tracing the dependence on $N$ of the auxiliary parameter $A$ introduced
in that proof.

The following variant is not proved in this paper, but could be established by augmenting the method used here
with a more precise version of Lemma~\ref{lemma:three}, below. Such a refinement can be established
by arguments closely related to those in \cite{christweighted}.
For $\alpha\in(0,1)$ let $C^{2,\alpha}$ denote the class of all $C^2$ functions whose second order partial
derivatives are all H\"older continuous of order $\alpha$.

\begin{claim}\label{thm:three}
Let $n\ge 1$
and $\alpha\in(0,1)$. 
Let $L$ be a holomorphic line bundle over a compact complex manifold $X$ of complex dimension $n$.
Let there be given a positive metric of regularity class $C^{2,\alpha}$ on $L$, 
and a $C^\infty$ Hermitian metric on $X$.
Then for each $\delta>0$ there exist $\Lambda<\infty$ and $\eta>0$ 
such that 
for all $\lambda\ge\Lambda$
and for any open sets $U,U'\subset X$ satisfying $\rho(U,U')\ge\delta$,
for any section $f\in L^2(X,L^\lambda)$ supported in $U'$,
\begin{equation}
\norm{B_\lambda f}_{L^2(U)} \le e^{-\eta \sqrt{\lambda\,\log\lambda}}\norm{f}_{L^2}.
\end{equation}
\end{claim}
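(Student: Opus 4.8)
The plan is to follow the proof of Theorem~\ref{thm:main}, replacing its key ingredient, Lemma~\ref{lemma:three}, by a quantitative version valid under the weaker regularity hypothesis.

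\emph{Reduction.} Fix $\delta>0$ and open sets $U,U'$ with $\rho(U,U')\ge\delta$. Choose a real-valued $\psi\in C^2(X)$ with $\psi\le 0$ on the $\delta/3$-neighborhood of $U$, $\psi\ge 1$ on the $\delta/3$-neighborhood of $U'$, and $\norm{\psi}_{C^2}\le C\delta^{-2}$. Everything reduces to the twisted coercivity inequality
\[
\Re\big\langle e^{t\psi}\,\square_\lambda\, e^{-t\psi}u,\,u\big\rangle\ \ge\ \norm{u}_{\lt}^2 ,\qquad t=\eta_0\sqrt{\lambda\log\lambda},
\]
valid for all twice continuously differentiable sections $u$ of $B^{(0,1)}\otimes L^\lambda$ and all sufficiently large $\lambda$, with $\eta_0>0$ depending only on $\delta$ and on the fixed data. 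Indeed, this inequality says that $G_\lambda=\square_\lambda^{-1}$ is bounded on $\lt$ equipped with the weight $e^{2t\psi}$, hence maps $\lt(U')$ to $\lt(U)$ with unweighted operator norm $O(e^{-t})$. Since $\rho(U,U')\ge\delta>0$, the term $I$ in $B_\lambda=I-\dbar_\lambda^*\circ G_\lambda\circ\dbar_\lambda$ contributes nothing on $U\times U'$, and the kernel of $\dbar_\lambda^*\circ G_\lambda\circ\dbar_\lambda$ there is obtained from $G_\lambda(z,z')$ by one differentiation in each variable. Because $\square_\lambda$ is an elliptic system whose coefficients are $O(\lambda^2)$ in $C^N$ norms, the routine interior elliptic bootstrapping of the main body converts the weighted $\lt$ bound into $\norm{B_\lambda f}_{\lt(U)}\le \lambda^{C}e^{-t}\norm{f}_{\lt}$ for $f$ supported in $U'$, for some fixed $C$; since $\lambda^{C}=e^{C\log\lambda}=e^{o(\sqrt{\lambda\log\lambda})}$, this yields the Claim with any $\eta<\eta_0$.

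\emph{The coercivity inequality.} The elementary computation \eqref{formlowerboundtwisted}, applied with $t\psi$ in place of $\eps\sqrt\lambda\,\psi$, gives the displayed inequality only for $t\lesssim\sqrt\lambda$: conjugation produces an error $\sim t^2\norm{\nabla\psi}_\infty^2\norm{u}^2$, to be absorbed against the curvature gain $c\lambda\norm{u}^2$, and $\norm{\nabla\psi}_\infty\gtrsim\delta^{-1}$ is forced. The proof of Theorem~\ref{thm:main} defeats this obstruction through the auxiliary construction of Lemma~\ref{lemma:three}: roughly speaking, one compares $\phi$ near the quantum scale to a suitable real-analytic approximant $\tilde\phi$ --- so that $e^{-\lambda\phi}$ and $e^{-\lambda\tilde\phi}$ remain comparable up to subexponential factors --- and runs a sharper, ``analytic'' twisted estimate for $\tilde\phi$, whose admissible exponent degrades from the full Gaussian rate only over distances comparable to the analyticity scale $\sigma$, so that over a fixed distance $\delta$ it scales like $\lambda\sigma$ rather than $\sqrt\lambda$; optimizing the scale gives $\sigma\sim(\log\lambda/\lambda)^{1/2}$ and hence $t\sim\sqrt{\lambda\log\lambda}$. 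For a merely $C^\infty$ metric the control of $\tilde\phi$ is non-uniform, which is why Theorem~\ref{thm:main} must settle for $h(\lambda)\to\infty$ in place of a constant; under $\phi\in C^{2,\alpha}$ the curvature $i\p\dbar\phi$ is H\"older of order $\alpha$, so the errors governing the comparison (for a mollification at scale $\sigma$, $\norm{i\p\dbar\phi-i\p\dbar\tilde\phi}_{C^0}\lesssim\sigma^\alpha$, and so forth) carry a fixed power of $\sigma$, and this is what permits $\eta$ to be taken a genuine constant. The plan is therefore: (i) the reduction above; (ii) establish the quantitative, $C^{2,\alpha}$ refinement of Lemma~\ref{lemma:three} by the weighted $\dbar$-estimate techniques of \cite{christweighted}; (iii) insert it into the argument of Theorem~\ref{thm:main} verbatim. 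Steps (i) and (iii) are bookkeeping.

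\emph{Main obstacle.} The substance is (ii), and within it the delicate point is to propagate the limited, $C^{0,\alpha}$ regularity of the curvature through the iteration that accumulates the $\sqrt{\lambda\log\lambda}$ gain, so that the total error stays $o(\sqrt{\lambda\log\lambda})$ in the exponent and $\eta$ emerges as a true constant rather than a slowly growing function of $\lambda$. In particular the weight-deformation/approximate-analyticity step must be performed with quantitative H\"older bounds in place of the purely qualitative ``analytic approximation at each scale'' available for $C^\infty$ metrics, and the comparison between the operators built from $\phi$ and from $\tilde\phi$ must be effected within the weighted $e^{2t\psi}$ framework rather than by a crude resolvent perturbation --- at the relevant scale this difference is not a small perturbation of $\square_\lambda$. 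This is precisely the refinement that \cite{christweighted} supplies, and it is the only part of the argument not already contained in the proof of Theorem~\ref{thm:main}.
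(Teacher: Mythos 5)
You should note at the outset that the paper does not actually prove this Claim: it states explicitly that the Claim ``is not proved in this paper'' and only indicates the intended route, namely a sharpened version of Lemma~\ref{lemma:three} (near-diagonal bounds carrying a decay factor, obtained from a Taylor expansion of degree $2$ by arguments close to \cite{christweighted}) inserted into the composition machinery of Lemma~\ref{lemma:addsupports} and Lemma~\ref{lemma:two}, with a \emph{fixed} admissible value of $A$ producing a fixed $\eta$. Measured against that route, your proposal has a fatal structural flaw: the ``Reduction'' step, which asserts that everything reduces to the twisted coercivity inequality $\Re\langle e^{t\psi}\square_\lambda e^{-t\psi}u,u\rangle\ge\norm{u}^2$ with $t=\eta_0\sqrt{\lambda\log\lambda}$ for a fixed smooth $\psi$. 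That inequality is not merely hard to prove under limited regularity; it is false for $t\gg\sqrt\lambda$ no matter how regular $\phi$ is (even real analytic). Symmetrizing the conjugation, the relevant quadratic form is, up to cross terms of size $O(t)\norm{u}(\norm{\dbar_\lambda u}+\norm{\dbar_\lambda^*u})$,
\begin{equation*}
\langle\square_\lambda u,u\rangle-t^2\norm{\,|\dbar\psi|\,u}^2 ,
\end{equation*}
and since the bottom of the quadratic form of $\square_\lambda$ is only $\asymp\lambda$ (cf.\ \eqref{formlowerbound}), testing with a low-lying $u$ concentrated where $\nabla\psi\ne0$ makes this $\approx(c\lambda-t^2|\nabla\psi|^2)\norm{u}^2<0$ once $t^2\gg\lambda$. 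This is exactly why the Orientation section's weighted argument \eqref{formlowerboundtwisted} stops at $e^{-c\sqrt\lambda}$: the improvement to $\sqrt{\lambda\log\lambda}$ cannot be achieved by a single Agmon-type estimate with a $\lambda$-independent weight, and the proof of Theorem~\ref{thm:main} never establishes (nor uses) such an estimate.

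The actual mechanism, which your sketch misattributes to a ``sharper analytic twisted estimate,'' is local-plus-iterative: Proposition~\ref{prop:neardiagonal} gives Gaussian decay of $\scriptg_\lambda$ only out to distance $A\lambda^{-1/2}\sqrt{\log\lambda}$, and the far-from-diagonal bound is then produced by truncating $G_\lambda$ to that scale and composing the resulting error operator $N\asymp\delta A^{-1}\sqrt{\lambda/\log\lambda}$ times (Lemma~\ref{lemma:addsupports}, Lemma~\ref{lemma:two}), each factor contributing $\lambda^{C-cA^2}$, so the exponent $cA\delta\sqrt{\lambda\log\lambda}$ is accumulated multiplicatively rather than extracted from one weight. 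Accordingly, the correct $C^{2,\alpha}$ proof, as the paper indicates, is: prove a quantitative near-diagonal bound with a decay factor valid for some fixed sufficiently large $A$ (using only the second-order Taylor polynomial of $\phi$ and weighted estimates in the spirit of \cite{christweighted}; the fixed $A$ is precisely why one gets a constant $\eta$ instead of $h(\lambda)\to\infty$), and then run Lemma~\ref{lemma:two} and the elliptic bootstrap verbatim. Your items (ii)--(iii) gesture toward this, but as organized your argument routes everything through the false coercivity inequality, and the one genuinely substantive ingredient --- the near-diagonal estimate under $C^{2,\alpha}$ --- is only described qualitatively (mollification at scale $\sigma$, curvature error $\sigma^\alpha$) rather than established; asserting that \cite{christweighted} ``precisely supplies'' it overstates what is available off the shelf, since that reference treats $n=1$ in $\complex$ and the paper itself only claims the refinement ``can be established by arguments closely related'' to it.
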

These $L^2$ norms are computed with respect to the weight $\phi$.
The proof uses Taylor expansion of degree $2$, rather than of high degree.  

\section{Unweighted bounds and twisted operators}

It will be convenient to work in an equivalent framework, in a coordinate patch in $X$,
in which $L$ is trivial and norms are defined by integrals without $\lambda$--dependent weights,
but the underlying operators $\dbar_\lambda$, $\square_\lambda$ are twisted.
This framework is more natural for discussion of regularity. 

Let $U$ be a small coordinate patch on $X$, over which $L$ may be
identified with $\complex$. Functions and differential forms may be regarded as scalar--valued. 
For each $q$, $\dbar_\lambda$, mapping sections of $B^{(0,q)}\otimes L^\lambda$ over $U$
to sections of $B^{(0,q+1)}\otimes L^\lambda$ over $U$,
is naturally identified with the standard Cauchy-Riemann operator $\dbar$,
which maps sections of $B^{(0,q)}$ to sections of $B^{(0,q+1)}$.

$\phi\in C^\infty$ is $\reals$-valued, and the positive curvature assumption means precisely
that its complex Hessian matrix 
$\begin{pmatrix} \partial^2\phi/\partial z_j\partial\bar z_k \end{pmatrix}$
is strictly positive definite at each point of $U$.
The $C^\infty$ Hermitian metric $g$ given for $X$ is interpreted as a $C^\infty$
Hermitian metric on $U$, and gives rise to a volume form, expressed as
a measure $\mu$ on $U$, which is a smooth nonvanishing multiple of Lebesgue
measure on $\complex^n$. It also gives rise, for each $q$, to a $C^\infty$ metric
on $B^{(0,q)}$ over $U$.
The $L^2$ norm squared of a section of $B^{(0,q)}$ over $U$, regarded
as a scalar-valued function $f$, is expressed as $\int_U |f(z)|^2 e^{-2\lambda \phi(z)}\,d\mu(z)$,
where $|f(z)|$ is measured according to $g$.

Substituting $fe^{-\lambda \phi}=u$,
the norm squared of $f$ with respect to the weight $\phi$ 
becomes $\norm{f}_{L^2}^2 = \int_U |u(z)|^2\,d\mu(z)$; there is no weight in this integral.
Moreover \begin{equation} e^{-\lambda \phi} \dbar f = e^{-\lambda \phi} \dbar(ue^{\lambda \phi}) 
= \dbar u + \lambda au \end{equation} where $a=\dbar \phi\in C^\infty$.
For each $q$ define
\begin{equation} \bar D_\lambda = e^{-\lambda\phi}\circ\dbar\circ e^{\lambda\phi}
= \bar\partial+\lambda a \cdot.  \end{equation} 
This is a first-order linear partial differential operator with smooth coefficients, but with
a zero-th order term proportional to the large parameter $\lambda$.
The formal adjoint(s) $\bar D_\lambda^*$ are defined with respect to the given metric $g$ and associated volume form.
These data are assumed to be only $C^\infty$, rather than $C^\omega$,
but their potential lack of analyticity is less significant than that of $\phi$ because they
are not multiplied by the large parameter $\lambda$. 

Define
\begin{equation}
\Delta_\lambda = 
\begin{cases} \dbarlambda\dbarlambda^* + \dbarlambda^*\dbarlambda\qquad  &\text{for $n>1$,}
\\
\dbarlambda\dbarlambda^* &\text{for $n=1$,}
\end{cases}
\end{equation}
acting on $(0,1)$ forms over $U$. 
Under these identifications,
\begin{equation*}
\Delta_\lambda = e^{-\lambda\phi}\circ\square_\lambda\circ e^{\lambda\phi}.
\end{equation*}
The function
\begin{equation}
\scriptg_\lambda(z,w) = e^{-\lambda\phi(z)+\lambda\phi(w)}G_\lambda(z,w)
\end{equation}
represents a fundamental solution for $\Delta_\lambda$ with pole at $w$, in the usual sense.
This is a section of the complex endomorphism bundle of $B^{(0,1)}$ over $U\times U$ minus the diagonal;
in this local coordinate system, it is a matrix-valued function.
Its size $|\scriptg_\lambda(z,w)|$ is defined with respect to given smooth metrics 
which do not depend on $\lambda$, so upper bounds with respect to these metrics are uniformly equivalent
to upper bounds with respect to the standard metrics on these bundles.

Theorem~\ref{thm:main} is therefore equivalent to an upper bound for all $(z,w)$ in $U\times U$
minus the diagonal of the form
\begin{equation} |\scriptg_\lambda(z,w)|\le e^{-A\sqrt{\lambda\log\lambda}} 
 \ \text{ for all $\lambda\ge\Lambda(\delta,A)$, whenever $|z-w|\ge\delta$ }
\end{equation}
with corresponding upper bounds for  all first and second--order derivatives of $\scriptg_\lambda$
with respect to $z,w$ in this same region.

\section{A near-diagonal upper bound}

Theorem~\ref{thm:main},
which is concerned with the nature of $G_\lambda$ far from the diagonal, will be derived from a 
description of $G_\lambda$ much nearer the diagonal.
The main point is the manner in which the bounds depend 
on $\lambda,A$; these bounds are completely independent of the exponent $A$, 
provided only that $\lambda$ exceeds a certain threshold, which does depend on $A$.
The reasoning below will require bounds for derivatives of $G_\lambda$, as well as
for $G_\lambda$ itself. These bounds are more naturally expressed in terms
of the twisted kernels $\scriptg_\lambda$ introduced above.
$\nabla$ will denote the gradient in $\complex^n\times\complex^n$, with respect to both coordinates $z,z'$.


\begin{proposition} \label{prop:neardiagonal}
There exist $c_0,A_0\in\reals^+$ such that
for any $A\in[A_0,\infty)$ there exists $\Lambda=\Lambda(A)<\infty$
such that for any $\lambda\ge\Lambda$ and any $z,z'\in U$ satisfying
\begin{equation} 
A_0\lambda^{-1/2}\log\lambda
\le |z-z'|
\le A\lambda^{-1/2}\log\lambda,
\end{equation}
$\scriptg_\lambda(z,z')$ satisfies
\begin{equation}
|\scriptg_\lambda(z,z')| 
+ |\nabla_{z,z'}\scriptg_\lambda(z,z')| 
\le 
e^{-c_0\lambda|z-z'|^2}.
\end{equation}
\end{proposition}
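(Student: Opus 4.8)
The natural strategy is to combine the quantitative lower bound \eqref{formlowerbound} with a weighted ($\emph{i.e.}$ twisted-by-an-auxiliary-weight) energy estimate of the type \eqref{formlowerboundtwisted}, but now with the weight concentrated at scale $\lambda^{-1/2}$ near the pole, followed by elliptic regularity to pass from $L^2$ bounds to pointwise bounds on $\scriptg_\lambda$ and its gradient. Fix the pole $w=z'$ and localize: on the ball $B = \{\,|z-w|<\tfrac12\delta_0\,\}$ for a small fixed $\delta_0$, the operator $\Delta_\lambda$ acts, and $\scriptg_\lambda(\cdot,w)$ satisfies $\Delta_\lambda \scriptg_\lambda(\cdot,w)=0$ on $B\setminus\{w\}$. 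The key is that after the substitution that produced $\Delta_\lambda$, the lower bound $\langle \Delta_\lambda u,u\rangle\ge c\lambda\|u\|^2$ persists for $u$ supported in $B$ (this is just \eqref{formlowerbound} transported through the conjugation $e^{-\lambda\phi}\square_\lambda e^{\lambda\phi}$).

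The main step is the weighted estimate. Choose the auxiliary real weight $\psi(z) = c_0|z-w|^2$ with $c_0$ small; run the computation in \eqref{formlowerboundtwisted} but with the scaling $\e\sqrt{\lambda}$ replaced by $\lambda$ times a small constant, so that $e^{\lambda\psi}\Delta_\lambda e^{-\lambda\psi}$ still satisfies a lower bound $\gtrsim\lambda\|u\|^2$ for small enough $c_0$. The commutator terms are controlled because $|\nabla\psi|=O(|z-w|)$ and, on the annulus where $|z-w|\lesssim A\lambda^{-1/2}\log\lambda$, the gradient contributes a factor $O(\lambda^{-1/2}\log\lambda)$, which is absorbed once $\lambda$ is large relative to a threshold depending on $A$ but \emph{not} on any target exponent. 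This yields: $\Delta_\lambda^{-1}$ on $B$ maps $L^2$ to weighted $L^2$ with weight $e^{2\lambda c_0|z-w|^2}$, hence applied to a source localized at distance $\ge A_0\lambda^{-1/2}\log\lambda$ from $w$, one gains the factor $e^{-c_0\lambda|z-z'|^2}$ in the region of interest. One must be slightly careful about the singularity at $w$: since $w$ lies outside the region where $z$ ranges (by the lower bound $|z-z'|\ge A_0\lambda^{-1/2}\log\lambda$), $\scriptg_\lambda(\cdot,w)$ is a genuine solution there and the pole is harmless; alternatively one represents $\scriptg_\lambda$ via a parametrix and applies the weighted bound to the error term.

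The final step is the passage from the weighted $L^2$ bound to the pointwise bound on $\scriptg_\lambda(z,z')$ and $\nabla_{z,z'}\scriptg_\lambda(z,z')$. This is the routine elliptic bootstrap alluded to at the end of the Introduction: on a ball of radius $\sim\lambda^{-1/2}$ around a point $(z,z')$ in the annulus, rescale coordinates by $\lambda^{1/2}$ so that $\Delta_\lambda$ becomes an elliptic system with coefficients bounded in every $C^N$ norm uniformly in $\lambda$; interior elliptic estimates then bound $\scriptg_\lambda$ and $\nabla\scriptg_\lambda$ pointwise by the $L^2$ norm of $\scriptg_\lambda$ over the slightly larger ball, and the rescaling Jacobians only cost powers of $\lambda$, which are swallowed by the exponential. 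Symmetry in $(z,w)$ (the complex conjugate of $\scriptg_\lambda(z,\cdot)$ solves the analogous equation) handles the $\p_{z'}$ derivatives.

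The main obstacle I anticipate is bookkeeping the $\lambda$-dependence in the commutator terms so that the threshold $\Lambda(A)$ genuinely absorbs all the $A$-dependence while the constant $c_0$ in the exponent stays fixed — in particular checking that the loss $O(\lambda^{-1/2}\log\lambda)$ from $\nabla\psi$ on the relevant annulus, multiplied against the $O(\lambda)$ from the zeroth-order term, is beaten by the $c\lambda$ coercivity with room to spare for all large $\lambda$. A secondary technical point is ensuring the localization to $B$ does not destroy the coercivity: one handles this either by a cutoff and absorbing the cutoff commutator (which is $O(1)$, hence negligible against $c\lambda$), or by working directly with the global $G_\lambda$ and exploiting that the weight $e^{\lambda c_0|z-w|^2}$ is bounded on the coordinate patch.
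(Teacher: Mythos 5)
There is a genuine gap, and it sits exactly at the step you flag as mere ``bookkeeping.'' With the Agmon weight $\varphi=c_0\lambda|z-w|^2$, the conjugated quadratic form picks up a zeroth-order loss of size $|\bar\partial\varphi|^2\asymp c_0^2\lambda^2|z-w|^2$ (the cross terms against $\norm{\dbarlambda u},\norm{\dbarlambda^*u}$ cost this after Cauchy--Schwarz, and the same loss appears if one instead views $\lambda\phi-c_0\lambda|z-w|^2$ as a new weight, because conjugation by $e^{\varphi}$ twists $\dbarlambda$ and $\dbarlambda^*$ with \emph{opposite} signs, so the conjugated operator is not the Laplacian of the shifted weight). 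On the annulus $|z-w|\asymp A\lambda^{-1/2}\sqrt{\log\lambda}$ this loss is $\asymp c_0^2A^2\lambda\log\lambda$, while the available coercivity is only $c\lambda$: the ratio is $c_0^2A^2\log\lambda\to\infty$, so enlarging $\lambda$ makes the absorption \emph{worse}, not better — the opposite of what you assert. The only repairs are to shrink $c_0\lesssim (A\sqrt{\log\lambda})^{-1}$ or to restrict to $|z-w|\lesssim\lambda^{-1/2}$; either way the resulting bound at the outer radius is of the type $e^{-cA\sqrt{\log\lambda}}$ (the classical $e^{-c\sqrt\lambda\,|z-w|}$ estimate already discussed in the Introduction), not the required $e^{-c_0\lambda|z-z'|^2}\asymp\lambda^{-c_0A^2}$, and it is far too weak to make $\norm{E}_{\op}\le\lambda^{C-cA^2}$ in the iteration of Lemma~\ref{lemma:two}.

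There is also a structural reason your route cannot succeed: it uses only the Hessian lower bound and finitely many derivatives of $\phi$, so if it worked it would prove the Proposition, hence Theorem~\ref{thm:main}, with constants depending only on a fixed finite-regularity norm of the metric — contradicting the lower bounds of \cite{christcounter} and the necessity of the $N$-dependence in Theorem~\ref{thm:two}. The paper's proof is built precisely to exploit $C^\infty$ smoothness beyond any fixed order: after globalization and a gauge change it replaces $\phi$ near the pole by its Taylor polynomial of large degree $N$, rescales by $r\asymp|z-z'|$ so that the resulting operator $\square^\dagger$ has \emph{uniformly real-analytic} coefficients and coercivity $c\kappa$ with $\kappa=r^2\lambda$, and then invokes Lemma~\ref{lemma:four} (quantitative analytic hypoellipticity of the Kohn Laplacian on the associated circle bundle) to gain the factor $e^{-c\kappa}=e^{-c\lambda|z-z'|^2}$; the Taylor-truncation error $O(\lambda^{-cN})$ is what forces the threshold $\Lambda(A)$, with $N=A^2$. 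Your final elliptic-bootstrap step for the pointwise bounds is fine and matches the paper, but the analytic-approximation/hypoellipticity mechanism, which is the heart of the Proposition, is absent from your plan and cannot be replaced by the weighted $L^2$ estimate you propose.
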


Here $\op$ denotes the  operator norm on the Hilbert space $L^2(X,B^{(0,1)}\otimes L^\lambda)$.


As is well understood, there is a natural scale $\asymp \lambda^{-1/2}$ inherent in this situation.
In the model situation in which $X=\complex^n$ and $\phi(z)\equiv \tfrac12 |z|^2$,
$|\scriptg_\lambda(z,z')|\asymp e^{-c_0 \lambda |z-z'|^2} |z-z'|^{2-2n}$ for $n>1$, with the
power of $|z-z'|$ replaced by $\log(1/|z-z'|)$ for $n=1$. 
Proposition~\ref{prop:neardiagonal} asserts essentially that this model upper bound persists up to a distance
which is greater by a multiplicative factor of $A\sqrt{\log\lambda}$
than the natural scaled distance, for arbitrarily large $A$. 
The lower bound $|z-z'|\ge A_0\lambda^{-1/2}\sqrt{\log\lambda}$
is an inesssential technicality introduced in order to simplify the statement and proof of the lemma;
otherwise the upper bound would have to be modified in order to take the near-diagonal factor $|z-z'|^{2-n}$ into account. 

In the next section we will show how Theorem~\ref{thm:main} is an essentially formal consequence
of Proposition~\ref{prop:neardiagonal}. 
We will then review and establish foundational results, none of which involve significant novelty, 
before finally proving the Proposition.

\section{The near-diagonal bound implies the far-from-diagonal bound}

$\norm{T}_{\text{op}}$ will denote the operator norm of $T$,
as an operator on $L^2(X,B^{(0,1)}\otimes L^\lambda)$.
Recall that $\rho$ denotes the Riemannian distance function on $X^2$.
The following obvious statement is at the heart of the construction.

\begin{lemma} \label{lemma:addsupports}
Let $T_1,T_2$ be bounded linear operators on $L^2(X,B^{(0,q)}\otimes L^\lambda)$.
Let $r_i>0$ and suppose that for $i=1,2$, the distribution-kernel associated to $T_i$ 
is supported in $\set{(z,z')\in X^2: \rho(z,z')\le r_i}$.
Then the distribution-kernel associated to $T_1\circ T_2$
is supported in $\set{(z,z')\in X^2: \rho(z,z')\le r_1+r_2}$.
\end{lemma}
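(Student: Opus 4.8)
The statement is about supports of composed integral operators, so the natural approach is to unwind the definition of the distribution-kernel of a composition and apply a routine support-of-product argument. The only subtlety is that we are on a compact manifold and the operators act on sections of a bundle, so ``distribution-kernel'' must be interpreted as a section of the external tensor product of the bundle with its dual over $X^2$; but this plays no essential role, and one may test against smooth sections in the usual way.

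\medskip
\textbf{Step 1: Reduce to a statement about test sections.} Let $K_i$ denote the distribution-kernel of $T_i$, so that for smooth sections $f$ of $B^{(0,q)}\otimes L^\lambda$,
\begin{equation*}
(T_i f)(z) = \int_X K_i(z,z')\,f(z')\,d\mu(z').
\end{equation*}
The hypothesis that $K_i$ is supported in $\{\rho(z,z')\le r_i\}$ means precisely that $\langle T_i f, g\rangle = 0$ whenever $f,g$ are smooth sections whose supports $E=\operatorname{supp}(f)$, $F=\operatorname{supp}(g)$ satisfy $\rho(F,E) > r_i$ (equivalently, $(F\times E)\cap\{\rho\le r_i\}=\varnothing$). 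To prove the lemma it suffices to show that $\langle T_1 T_2 f, g\rangle = 0$ whenever $\operatorname{supp}(f)=E$ and $\operatorname{supp}(g)=F$ satisfy $\rho(F,E) > r_1+r_2$.

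\medskip
\textbf{Step 2: Insert a geometric cutoff and apply the hypotheses.} Fix such $f,g$ and set $\sigma = \rho(F,E) > r_1+r_2$. Choose $r_1 < s_1$ and $r_2 < s_2$ with $s_1+s_2 < \sigma$; this is possible since $\sigma > r_1+r_2$. Let $W = \{w\in X : \rho(w,F)\le s_1\}$ and let $\chi\in C^\infty(X)$ be identically $1$ on a neighborhood of $W$ and supported in $\{\rho(\cdot,F) < s_1+\eta\}$ for a small $\eta>0$ chosen so that $s_1+\eta + s_2 < \sigma$. Write $T_2 f = \chi T_2 f + (1-\chi)T_2 f$. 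The first piece: $\langle T_2 f, \psi\rangle$ for any smooth $\psi$ supported in $\operatorname{supp}(\chi)$ — no, rather, observe that $(1-\chi)$ vanishes on $W$, so $\chi T_2 f$ is, up to a smooth remainder one must control, concentrated within distance $s_1+\eta$ of $F$; meanwhile $T_1$ applied to something supported within distance $s_1+\eta$ of $F$ produces something supported within distance $s_1+\eta+r_1 < \sigma - s_2 \le \sigma$ of $F$... The cleanest route avoids this juggling: instead directly estimate the matrix coefficient. Writing the composition as an iterated integral,
\begin{equation*}
\langle T_1 T_2 f, g\rangle = \iint_{X\times X} \Big\langle g(z),\, K_1(z,w)\, (T_2 f)(w)\Big\rangle\, d\mu(w)\, d\mu(z),
\end{equation*}
and $(T_2 f)(w) = \int_X K_1(w,w')\,$ — rather $K_2(w,w')f(w')\,d\mu(w')$. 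The integrand can be nonzero only when $z\in F$ (support of $g$), $w'\in E$ (support of $f$), and, by the support hypotheses on the kernels, $\rho(z,w)\le r_1$ and $\rho(w,w')\le r_2$. By the triangle inequality for $\rho$, these force $\rho(z,w')\le r_1+r_2 < \sigma = \rho(F,E)$, contradicting $z\in F$, $w'\in E$. Hence the integrand vanishes identically and $\langle T_1 T_2 f, g\rangle = 0$.

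\medskip
\textbf{Step 3: Conclude.} Since this holds for all smooth $f,g$ with $\rho(\operatorname{supp} f,\operatorname{supp} g) > r_1+r_2$, the distribution-kernel of $T_1\circ T_2$ is supported in $\{(z,z')\in X^2 : \rho(z,z')\le r_1+r_2\}$.

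\medskip
\emph{The main obstacle} is purely one of rigor rather than of idea: the ``kernels'' $K_i$ are distributions, not functions, so the iterated-integral manipulation in Step 2 is not literally valid and the contradiction argument must be phrased in terms of pairings against test sections, using a partition-of-unity/cutoff argument of the type sketched (and abandoned) above — decompose $T_2 = \chi_1 T_2 + (1-\chi_1)T_2$ where $\chi_1$ localizes near $F$, note $(1-\chi_1)T_2 f$ is supported away from $F$ by the hypothesis on $K_2$, and then apply the hypothesis on $K_1$ to $\chi_1 T_2 f$. This is entirely routine, which is presumably why the author calls the statement ``obvious''; I would present the formal integral computation of Step 2 as the morally correct argument and remark that the distributional version follows by the standard cutoff reduction.
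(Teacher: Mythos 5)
Your argument is correct in substance, and there is nothing to compare it against: the paper states this lemma without proof, calling it obvious, and your formal computation in Step 2 (integrand nonzero forces $\rho(z,w)\le r_1$, $\rho(w,w')\le r_2$, hence $\rho(z,w')\le r_1+r_2$ by the triangle inequality) is exactly the intended reason. Two small points would tighten the rigorization you sketch at the end. First, the cutoff should localize around $\operatorname{supp} f$, not around $F$: the kernel hypothesis on $K_2$ gives directly that $\operatorname{supp}(T_2 f)$ lies in the closed $r_2$-neighborhood $W$ of $\operatorname{supp} f$, so with $\chi\equiv 1$ on a slightly larger neighborhood of $W$ and supported in the $(r_2+\eta)$-neighborhood you have $(1-\chi)T_2f=0$ identically (this is a triviality about where $T_2f$ lives, not a further use of $K_2$), and then $\langle T_1(\chi T_2 f),g\rangle=0$ by the hypothesis on $K_1$, since $\rho(\operatorname{supp}\chi,\operatorname{supp}g)>r_1$ for $\eta$ small. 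Second, $T_2 f$ is only an $L^2$ section, while the kernel-support hypothesis on $T_1$ is phrased via pairings of smooth sections; you should note that, because $T_1$ is bounded on $L^2$, the statement ``$\langle T_1 h,g\rangle=0$ whenever $\operatorname{supp}h$ and $\operatorname{supp}g$ are at distance $>r_1$'' extends from smooth $h$ to $h\in L^2$ by mollifying $h$ within a slightly enlarged neighborhood of its support and passing to the limit. With these two adjustments your cutoff reduction closes the argument completely.
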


This will be used to prove:
\begin{lemma} \label{lemma:two}
Let $A<\infty$ and $\delta>0$.
There exist $C<\infty$ and $\Lambda<\infty$
such that for every $\lambda\ge\Lambda$ there  exists
a bounded linear map $T$ 
from the space of $L^2$ sections of $B^{(0,1)}\otimes L^\lambda$
to itself with these two properties: 
Firstly, the distribution-kernel for $T$ is supported in $\set{(z,z'): \rho(z,z')\le\delta}$.
Secondly, 
\begin{equation} \norm{T\circ\square_\lambda-I}_{\text{op}} \le e^{ -A \lambda^{1/2} \sqrt{\log\lambda}}.  \end{equation}
\end{lemma}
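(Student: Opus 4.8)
\emph{Proof plan.}
The plan is to manufacture $T$ as a truncated Neumann series built from a short‑range approximate \emph{left} inverse of $\square_\lambda$, with Proposition~\ref{prop:neardiagonal} supplying the required smallness of the error term and Lemma~\ref{lemma:addsupports} keeping the support of $T$ inside $\set{\rho\le\delta}$. Let $C_1$ be a large constant, to be fixed at the end as a function of $A,\delta$, and set
\begin{equation*}
r=r(\lambda)=C_1\lambda^{-1/2}\log\lambda .
\end{equation*}
Choose a cutoff $\chi$ on $X\times X$, assembled from a partition of unity subordinate to a finite cover of $X$ by trivializing coordinate patches, with $\chi\equiv 1$ on $\set{\rho(z,z')\le r/2}$, $\chi\equiv 0$ on $\set{\rho(z,z')\ge r}$, and $|\nabla^j\chi|=O(r^{-j})$. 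Let $P_\lambda$ be the operator on $L^2(X,B^{(0,1)}\otimes L^\lambda)$ whose distribution‑kernel is $\chi(z,z')G_\lambda(z,z')$, where $G_\lambda$ is the kernel of the self‑adjoint operator $\square_\lambda^{-1}$. Since $\chi$ is supported where $\rho\le r\to 0$, the partition of unity reduces every estimate below to a single coordinate patch, in which $\square_\lambda$ is conjugate by multiplication by $e^{\lambda\phi}$ to $\Delta_\lambda$, $G_\lambda$ corresponds to $\scriptg_\lambda$, and $P_\lambda$ to the operator with kernel $\chi\scriptg_\lambda$; conjugation by $e^{\pm\lambda\phi}$ preserves both operator norms and kernel supports, so we may argue in this unweighted picture and apply Proposition~\ref{prop:neardiagonal} directly, provided $C_1\ge 2A_0$ so that the annulus $\set{r/2\le\rho\le r}$ lies in the range of that Proposition (used with its parameter equal to $C_1$). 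This localization is routine and I will not belabor it.

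Two facts about $P_\lambda$ are needed. First, $P_\lambda$ is bounded on $L^2$, with norm bounded independently of $\lambda$: on $\set{\rho\ge A_0\lambda^{-1/2}\log\lambda}$ the Proposition gives $|\scriptg_\lambda|\le 1$ on the thin support of $\chi$, contributing operator norm $O(r^{2n})=o(1)$ by Schur's test, while near the diagonal $\chi\scriptg_\lambda$ inherits the integrable singularity $O(|z-z'|^{2-2n})$ for $n\ge 2$ (resp.\ $O(|\log|z-z'||)$ for $n=1$) of the elliptic resolvent $\square_\lambda^{-1}$ (cf.\ \cite{bermanBS},\cite{catlin},\cite{tian},\cite{zelditch}), so Schur's test again bounds that part by $O\big(\int_{|w|\le 2A_0\lambda^{-1/2}\log\lambda}|w|^{2-2n}\,dw\big)=o(1)$. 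Second, since $\chi\equiv 1$ near the diagonal and $\square_\lambda G_\lambda=I$ there, a one‑line integration by parts gives
\begin{equation*}
\square_\lambda P_\lambda=I-R_\lambda,\qquad R_\lambda \ \text{has kernel}\ \ -[\square_{\lambda,z},\chi(\cdot,z')]\,G_\lambda(z,z'),
\end{equation*}
and $[\square_{\lambda,z},\chi]$ is a first‑order operator whose coefficients are supported in $\set{r/2\le\rho(z,z')\le r}$ and are $O(\lambda^2 r^{-2})$ there, using the $O(\lambda^2)$ bound on the coefficients of $\square_\lambda$ and $|\nabla^j\chi|=O(r^{-j})$. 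Hence on that annulus the kernel of $R_\lambda$ is bounded, via $|\scriptg_\lambda|+|\nabla\scriptg_\lambda|\le e^{-c_0\lambda|z-z'|^2}$ from Proposition~\ref{prop:neardiagonal}, by $C\lambda^2 r^{-2}e^{-c_0\lambda|z-z'|^2}\le C\lambda^2 r^{-2}e^{-c_0C_1^2(\log\lambda)^2/4}$ (since $\lambda|z-z'|^2\ge\lambda(r/2)^2=C_1^2(\log\lambda)^2/4$). Each slice of the annulus has volume $O(r^{2n})$, so Schur's test yields
\begin{equation*}
\norm{R_\lambda}_{\text{op}}\le C\lambda^2 r^{2n-2}\,e^{-c_0C_1^2(\log\lambda)^2/4}\le e^{-c_0C_1^2(\log\lambda)^2/8}
\end{equation*}
for all $\lambda\ge\Lambda_1(C_1)$, the polynomial factor $\lambda^2 r^{2n-2}=e^{O(\log\lambda)}$ being absorbed once $\lambda$ is large. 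The key point is that the rate in the exponent is \emph{quadratic} in $C_1$.

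Finally I would run the truncated Neumann series. Taking adjoints in $\square_\lambda P_\lambda=I-R_\lambda$ and using $\square_\lambda^*=\square_\lambda$, put $Q:=P_\lambda^*$ and $S:=R_\lambda^*$, so $Q\square_\lambda=I-S$, with $Q$ bounded, $\norm{S}_{\text{op}}=\norm{R_\lambda}_{\text{op}}$, and the kernel of $S$ supported in $\set{r/2\le\rho\le r}$. Set $K:=\lfloor\delta/r\rfloor$ and
\begin{equation*}
T:=\sum_{k=0}^{K-1}S^k Q,\qquad\text{so that}\qquad T\square_\lambda=\sum_{k=0}^{K-1}S^k(I-S)=I-S^K .
\end{equation*}
By Lemma~\ref{lemma:addsupports}, $S^k$ has kernel supported in $\set{\rho\le kr}$ and $Q$ in $\set{\rho\le r}$, hence the kernel of $T$ is supported in $\set{\rho\le Kr}\subset\set{\rho\le\delta}$; this is the first required property. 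For the second, $\norm{T\square_\lambda-I}_{\text{op}}=\norm{S^K}_{\text{op}}\le\norm{S}_{\text{op}}^K\le e^{-c_0C_1^2(\log\lambda)^2K/8}$, and since $K\ge\tfrac{\delta}{2C_1}\lambda^{1/2}(\log\lambda)^{-1}$ for $\lambda$ large, the exponent is at least $\tfrac{c_0C_1\delta}{16}\lambda^{1/2}\log\lambda$. Choosing $C_1:=\max\!\big(2A_0,\ 16A/(c_0\delta)\big)$ makes $\tfrac{c_0C_1\delta}{16}\ge A$, so $\norm{T\square_\lambda-I}_{\text{op}}\le e^{-A\lambda^{1/2}\log\lambda}\le e^{-A\lambda^{1/2}\sqrt{\log\lambda}}$ for all $\lambda\ge\Lambda$, where $\Lambda$ is taken large enough to dominate $\Lambda(C_1)$ from Proposition~\ref{prop:neardiagonal}, $\Lambda_1(C_1)$, and the finitely many other ``$\lambda$ large enough'' requirements above; thus $\Lambda$ depends only on $A,\delta$. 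Finally $\norm{T}_{\text{op}}\le\norm{Q}_{\text{op}}\sum_{k\ge 0}\norm{S}_{\text{op}}^k\le 2\norm{Q}_{\text{op}}\le C$, which supplies the constant $C$.

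The only genuine work is the Schur estimate for $\norm{R_\lambda}_{\text{op}}$ and the check that the two roles of $C_1$ are compatible: the per‑factor smallness $e^{-c_0C_1^2(\log\lambda)^2/8}$ improves quadratically in $C_1$, which beats both the polynomial loss $\lambda^2 r^{2n-2}$ and the linear‑in‑$1/C_1$ shrinkage of the number $K\asymp\delta\lambda^{1/2}/(C_1\log\lambda)$ of factors one may compose before the support reaches size $\delta$. This is precisely where the full strength of Proposition~\ref{prop:neardiagonal} is used --- its reach out to radius $\asymp C_1\lambda^{-1/2}\log\lambda$ for arbitrarily large $C_1$. Note the argument in fact produces the stronger bound $e^{-A\lambda^{1/2}\log\lambda}$; the weaker form stated suffices downstream. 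The regularity hypotheses on $\phi$ and $g$ play no role here, being confined entirely to the proof of Proposition~\ref{prop:neardiagonal}.
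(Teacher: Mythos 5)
Your construction is, in structure, the same as the paper's: truncate the kernel of $\square_\lambda^{-1}$ at a mesoscopic radius $r$, show via Proposition~\ref{prop:neardiagonal} that the commutator error has small operator norm, pass to the adjoint identity $Q\square_\lambda=I-S$, and iterate a truncated Neumann series of length $\asymp\delta/r$, with Lemma~\ref{lemma:addsupports} controlling the support. The one substantive discrepancy is your choice of scale $r=C_1\lambda^{-1/2}\log\lambda$. You are reading the hypothesis of Proposition~\ref{prop:neardiagonal} literally, but the $\log\lambda$ there is a misprint for $\sqrt{\log\lambda}$: the discussion immediately following the Proposition (``greater by a multiplicative factor of $A\sqrt{\log\lambda}$'', ``the lower bound $|z-z'|\ge A_0\lambda^{-1/2}\sqrt{\log\lambda}$''), and its proof in Section~7 (where $N=A^2$ is chosen and the range obtained is $A_0\lambda^{-1/2}\sqrt{\log\lambda}\le|\zeta|\le A\lambda^{-1/2}\sqrt{\log\lambda}$, with an explicit remark that the range cannot be enlarged without making $N$ depend on $\lambda$), give only the $\sqrt{\log\lambda}$ range. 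Your own sanity check exposes the problem: the ``stronger'' conclusion $\norm{T\circ\square_\lambda-I}_{\operatorname{op}}\le e^{-A\lambda^{1/2}\log\lambda}$ would, fed through the proof of Theorem~\ref{thm:main}, yield off-diagonal decay $e^{-A\sqrt{\log\lambda}\,\sqrt{\lambda\log\lambda}}$ for a fixed $A$, contradicting the counterexamples of \cite{christcounter} (take $h(\lambda)=\sqrt{\log\lambda}$ there). So at your radius the Gaussian bound you invoke on the annulus $\set{r/2\le\rho\le r}$ is simply not available.

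The repair is mechanical and lands you exactly on the paper's argument: take $r\asymp A\lambda^{-1/2}\sqrt{\log\lambda}$ (the paper uses the cutoff $\eta(A^{-2}\lambda(\log\lambda)^{-1}\rho^2)$). Then on the transition annulus $\lambda|z-z'|^2\asymp A^2\log\lambda$, so the per-factor error is only polynomially small, $\norm{S}_{\operatorname{op}}\le\lambda^{C-cA^2}$ rather than $e^{-c(\log\lambda)^2}$, the admissible number of factors is $K\asymp\delta A^{-1}\lambda^{1/2}(\log\lambda)^{-1/2}$, and $\norm{S}_{\operatorname{op}}^K\le e^{-c'A\delta\sqrt{\lambda\log\lambda}}$, which after taking $A$ large gives the stated bound; here it is essential that the exponent $C-cA^2$ improves quadratically in $A$ while $K$ shrinks only linearly in $1/A$, which is the same tuning you already carried out. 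One further small point: your uniform-in-$\lambda$ bound $\norm{P_\lambda}_{\operatorname{op}}=O(1)$ rests on a near-diagonal singularity bound $|\scriptg_\lambda(z,z')|\lesssim|z-z'|^{2-2n}$ uniform in $\lambda$ that the paper does not establish (Lemma~\ref{lemma:three} only gives $(\lambda+|z-z'|^{-1})^C$); this is harmless because the lemma does not require $\norm{T}_{\operatorname{op}}$ to be bounded uniformly in $\lambda$ --- a bound polynomial in $\lambda$, which Lemma~\ref{lemma:three} does give via Schur's test, suffices.
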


\begin{proof}
Choose an auxiliary function $\eta\in C^\infty([0,\infty))$ that satisfies
$\eta(x)\equiv 1$ for $x\le\tfrac12$, and $\eta(x)\equiv 0$ for all $x\ge 1$.
Let $A<\infty$. 
Let $P$ be the operator with distribution-kernel
\begin{equation*}
K(z,w) = G_\lambda(z,w)\eta(A^{-2}\lambda(\log\lambda)^{-1}\rho^2(z,w)).
\end{equation*}
Letting $\square_\lambda$ act with respect to the $z$ variable,
and applying Leibniz's rule and the chain rule,
\begin{equation*}
|\square_\lambda (K(z,w)-G_\lambda(z,w))|
\le C \lambda^2 |G_\lambda(z,w)|   + C\lambda^2 |\nabla G_\lambda(z,w)|.
\end{equation*}
$\square_\lambda K(z,w)$ is supported where $\rho(z,w)\asymp A\lambda^{-1/2}(\log\lambda)^{1/2}$.
In this region, according to Proposition~\ref{prop:neardiagonal},
\begin{equation*} |G_\lambda(z,w)| + |\nabla G_\lambda(z,w)|
\le C \lambda^C e^{-c\lambda A^2 \lambda^{-1} \log\lambda}
\le C\lambda^{C-cA^2}.  \end{equation*}
So in all,
\begin{equation*} |\square_\lambda (K(z,w)-G_\lambda(z,w))| \le \lambda^{C-cA^2}
\end{equation*}
for all sufficiently large $\lambda$, uniformly for all pairs $(z,w)$ in $X^2$ minus the diagonal.
Since $\square_\lambda\circ G_\lambda=I$, this is an upper bound for the operator norm of $\square_\lambda\circ P-I$.
Since both $\square_\lambda$ and $P$ are formally self-adjoint, 
the same bound holds for $P\circ\square_\lambda-I$.

Given $\delta>0$, choose $N$ to be the largest integer such that
$N A \lambda^{-1/2}(\log\lambda)^{1/2}\le\delta$.  Thus
\begin{equation*} N\asymp A^{-1}\lambda^{1/2}(\log\lambda)^{-1/2}\delta.  \end{equation*}

Set 
\begin{equation*} E =I-\square_\lambda \circ P \qquad\text{ and } \qquad
T = P\circ \sum_{j=0}^{N-1} E^j \end{equation*}
so that
\begin{equation*} \square_\lambda\circ T = I-E^N.  \end{equation*}
Because the distribution-kernel for $P$ is supported where $\rho(z,w)\le A\lambda^{-1/2}\sqrt{\log\lambda}$,
the distribution-kernel for $T$ is supported where
\begin{equation*} \rho(z,w)\le NA \lambda^{-1/2} \sqrt{\log\lambda} \le\delta, \end{equation*}
according to Lemma~\ref{lemma:addsupports}.

Since $\norm{E}_{\text{op}} = \norm{\square_\lambda\circ P-I}_{\text{op}} \le \lambda^{C-cA^2}$,
\begin{equation*} \norm{E^N}_{\text{op}} \le \lambda^{(C-cA^2)N}
\le \lambda^{(C-cA^2)c(A^{-1}\lambda^{1/2}(\log\lambda)^{-1/2}\delta)}
\le e^{-c'A\lambda^{1/2}\sqrt{\log\lambda}} \end{equation*}
for all sufficiently large $A$.  \end{proof}

\begin{proof}[Proof of Theorem~\ref{thm:main}]
Consider any $z'\ne z''\in X$.
To prove the upper bound for $B_\lambda(z',z'')$,
consider any $L^2$ section $f$ of $B^{(0,1)}\otimes L^\lambda$
that is supported in $B''=B(z'',\tfrac14\rho(z',z''))$ and satisfies $\norm{f}_{L^2}\le 1$.
Choose $T$ as in Lemma~\ref{lemma:two}, with distribution-kernel supported
within distance $\tfrac12\rho(z',z'')$ of the diagonal.
Then in $B'=B(z',\tfrac14\rho(z',z''))$,
\begin{align*}
G_\lambda f &= T\square_\lambda G_\lambda f + O\big(e^{-A\sqrt{\lambda\log\lambda}}\norm{G_\lambda f}\big)
\\&= Tf + O\big(e^{-A\sqrt{\lambda\log\lambda}} \norm{f}\big).
\end{align*}
Since $T$ has distribution-kernel supported within distance $\tfrac12\rho(z,z')$ of the diagonal, $Tf\equiv 0$ in $B'$.
Therefore
\begin{equation*}
G_\lambda f =  O(e^{-A\sqrt{\lambda\log\lambda}}\norm{f}) \text{ in $L^2(B')$ norm.}
\end{equation*}
Thus as an operator from $L^2(B'')$ to $L^2(B')$, $G_\lambda$ has operator norm
$O(e^{-A\sqrt{\lambda\log\lambda}})$. 
Because $G_\lambda(z,w)$ is a solution of elliptic linear partial differential
equations with $C^\infty$ coefficients with respect to both variables $z,w$, and because the coefficients of
those equations are $O(\lambda^2)$ in every $C^N$ norm, it follows from standard bootstrapping arguments that
for any $N$, $G_\lambda\in C^N(B'\times B'')$, with norm $O(e^{-A\sqrt{\lambda\log\lambda}})$. 
Since the Bergman kernel is the distribution-kernel for
$I-\dbar^*_\lambda G_\lambda \dbar_\lambda$, this result with $N=2$
includes the desired upper bound.
\end{proof}

\section{Off-the-shelf upper bounds}
\subsection{Low regularity upper bounds}

Thus far the argument has been purely formal.  We now state two quantitative estimates on which 
the proof of Proposition~\ref{prop:neardiagonal} will rely.  One concerns metrics with nearly minimal 
regularity; the other, real analytic metrics.  The $C^\infty$ case is intermediate between the two.

\begin{lemma} \label{lemma:three}
For each $n\ge 1$ there exists $N<\infty$ with the following property.
Let $L$ be a positive holomorphic line bundle over a compact complex manifold $X$
of dimension $n$, equipped with a Hermitian metric $\phi$ of class $C^N$.
Assume that likewise that $X$ is equipped with a Hermitian metric $g$ of class $C^N$. 
Let $U,\scriptg_\lambda$ be as defined above.
Then there exists $C<\infty$ such that for all sufficiently large positive integers $\lambda$,
\begin{equation}\label{weaklocalupperbounds}
|\scriptg_\lambda(z,z')| + |\nabla \scriptg_\lambda(z,z')|  \le (\lambda+|z-z'|^{-1})^C
\end{equation}
for all $z\ne z'\in U$.
\end{lemma}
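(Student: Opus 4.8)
The plan is to prove Lemma~\ref{lemma:three} by combining the basic weighted $L^2$-inequality~\eqref{formlowerbound} for $\square_\lambda$ (equivalently the twisted inequality governing $\Delta_\lambda$) with a routine but carefully tracked elliptic regularity bootstrap, keeping explicit accounting of powers of $\lambda$. The target bound is polynomial in $\lambda$ and in $|z-z'|^{-1}$, so there is a lot of room: we need only crude estimates, not sharp ones.

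First I would record the $L^2 \to L^2$ consequence of the lower bound: since $\langle \square_\lambda u, u\rangle \ge c\lambda \|u\|^2$ and $\square_\lambda$ is self-adjoint, $G_\lambda = \square_\lambda^{-1}$ exists with $\|G_\lambda\|_{\mathrm{op}} \le (c\lambda)^{-1}$; passing to the twisted picture via $\scriptg_\lambda = e^{-\lambda\phi(z)+\lambda\phi(w)}G_\lambda$, the corresponding operator $\Delta_\lambda^{-1}$ on $L^2(U)$ with the unweighted norm is still $O(\lambda^{-1})$ in operator norm, because the exponential conjugating factors are absorbed in the change of variables $f \mapsto fe^{-\lambda\phi}$. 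So as an operator, $\Delta_\lambda^{-1}: L^2 \to L^2$ has norm $\le C\lambda^{-1}$.

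Next comes the elliptic regularity step, which is where the powers of $\lambda$ are generated. The operator $\Delta_\lambda$, written in local coordinates, is a second-order elliptic system whose coefficients are $O(\lambda^2)$ in every $C^N$ norm (this is stated in the excerpt for $\square_\lambda$, and the twisting does not change it). The plan is to cover $U\times U$ minus the diagonal by balls of radius $\asymp |z-z'|$ on which $\Delta_\lambda \scriptg_\lambda(\cdot,w) = 0$, apply interior elliptic estimates (Sobolev or Schauder) on such a ball, and iterate: each application of the a priori estimate $\|v\|_{H^{k+2}(B_{r/2})} \le C(\lambda)\,\big(\|\Delta_\lambda v\|_{H^k(B_r)} + r^{-M}\|v\|_{H^k(B_r)}\big)$ costs a fixed power of $\lambda$ (from the coefficient bounds) and a fixed negative power of $r\asymp|z-z'|$ (from rescaling to a unit ball). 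After finitely many steps — enough to reach $H^k$ with $k > n + 1$ so that $H^k$ embeds in $C^1$ — we obtain $|\scriptg_\lambda(z,z')| + |\nabla\scriptg_\lambda(z,z')| \le C\lambda^{C}(1 + |z-z'|^{-1})^{C}\,\|\scriptg_\lambda(\cdot,w)\|_{L^2(\text{annulus})}$. The same bootstrap in the $w$ variable (using that $\overline{\scriptg_\lambda(\cdot, w)}$ solves the conjugate equation in $w$) handles mixed regularity; alternatively one works on the product domain directly. Finally the $L^2$ mass of $\scriptg_\lambda$ over the relevant region is controlled by $\|\Delta_\lambda^{-1}\|_{\mathrm{op}} = O(\lambda^{-1})$ applied to a bump function, which is harmless. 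Replacing $(1+|z-z'|^{-1})$ and $\lambda$ by their sum and enlarging $C$ gives the stated form $(\lambda + |z-z'|^{-1})^C$. The requirement $N < \infty$ enters only here: one needs the coefficients in $C^{k}$ with $k$ large enough to run the bootstrap up to $C^1$, so $N = N(n)$ is whatever finite regularity the finitely many elliptic estimates demand.

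The main obstacle — really the only nonroutine point — is bookkeeping the $\lambda$-dependence uniformly while rescaling the small balls of radius $\sim|z-z'|$: one must rescale $B_r \to B_1$, apply the elliptic estimate with constants depending only on the ellipticity constant and the $C^N$ norms of the rescaled coefficients, and check that rescaling inflates the coefficient $C^N$ norms by at most a power of $r^{-1}$ and a power of $\lambda$ (since a derivative of a coefficient picks up $r$ under rescaling but the coefficients are already $O(\lambda^2)$). Because everything is polynomial, no cancellation or sharpness is needed; one just tracks exponents and takes $C$ large. I would present this as a short lemma-style argument citing standard interior elliptic estimates (e.g.\ from \cite{hormander}) rather than reproving them.
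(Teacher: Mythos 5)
Your proposal follows essentially the same route as the paper's own (very brief) proof: interpret the uniform $L^2$ boundedness of $\Delta_\lambda^{-1}$ as a weak a priori bound on the kernel, then run a routine localization and elliptic bootstrap on balls of radius $\asymp|z-z'|$, using ellipticity and the $O(\lambda^2)$ coefficient bounds, losing only fixed powers of $\lambda$ and $|z-z'|^{-1}$ at each step. Your version simply spells out the bookkeeping (rescaling, Sobolev embedding, bootstrap in the second variable) that the paper leaves to the reader with a citation to \cite{christweighted}, so it is correct and not a genuinely different argument.
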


\noindent Here $\nabla$ denotes the gradient with respect to both variables $z,z'$.

Considerably sharper upper bounds can be established, but they will not be needed 
in the proof of Theorem~\ref{thm:main}.

\begin{proof}
The fact that integration with respect to $\scriptg_\lambda$ defines a
bounded operator on $L^2(U,B^{(0,1)})$, uniformly in $\lambda$
can be interpreted as a weak {\it a priori} upper bound for $\scriptg_\lambda$, as for instance in \cite{christweighted}.
Routine localization and bootstrapping arguments, exploiting the ellipticity of $\Delta_\lambda$
and the $O(\lambda^2)$ bounds for its coefficients, lead directly to \eqref{weaklocalupperbounds}.
\end{proof}

\subsection{High regularity upper bounds}
We work now in the unweighted twisted framework introduced above.  Let $B\subset \complex^n$ be any 
fixed open ball of positive radius, and let $\tilde B\Subset B$ be any relatively compact subball.

\begin{lemma} \label{lemma:four}
Let the ball $B\subset\complex^n$ be equipped with a $C^\omega$ Hermitian metric $g$.
Let $L$ be any holomorphic line bundle over $B$,
equipped with a positive $C^\omega$ Hermitian metric $\phi$. 
There exist $\Lambda<\infty$ and $c>0$ such that for any $\lambda\ge\Lambda$
and any solution $u$ of $\Delta_\lambda u\equiv 0$ on $B$ 
\begin{equation} |u(z)|\le e^{-c\lambda}\norm{u}_{L^2(B)} \text{ for all } z\in\tilde B.  \end{equation}
\end{lemma}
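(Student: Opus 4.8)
The statement is that for real-analytic data, a solution of $\Delta_\lambda u\equiv 0$ on a ball decays like $e^{-c\lambda}$ on any compactly contained subball, relative to its $L^2$ norm on the full ball. Let me think about what the key mechanism is.

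The operator $\Delta_\lambda = e^{-\lambda\phi}\square_\lambda e^{\lambda\phi}$, in the weighted picture, is $\square_\lambda$ acting on $L^2(e^{-2\lambda\phi}d\mu)$. So a solution $u$ of $\Delta_\lambda u = 0$ corresponds to a solution $f = e^{\lambda\phi}u$ of $\square_\lambda f = 0$. But $\square_\lambda f = 0$ with the lower bound $\langle\square_\lambda f,f\rangle\ge c\lambda\|f\|^2$ forces $f\equiv 0$ globally on a compact manifold — but here $B$ is a ball, not compact, and there is no global $L^2$ control. So the content is genuinely local/interior: we need an interior decay estimate.

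Reverse engineering from the real-analytic hypothesis and the conclusion $e^{-c\lambda}$: this is the "Agmon/FBI" exponential decay one expects for the resolvent of a semiclassical operator with a nondegenerate well. Concretely, $\square_\lambda$ near a point is, after the substitution, comparable to a magnetic Schrödinger / Witten-type operator whose "potential" is the curvature $\partial\bar\partial\phi > 0$; the gap in $\eqref{formlowerbound}$ is the spectral gap, and a solution of the homogeneous equation is a "generalized eigenfunction at energy $0$" living below the bottom of the spectrum, hence it must be exponentially concentrated. In the real-analytic category the cleanest route is the method of complex/analytic dilations or of Sjöstrand's exponentially weighted estimates: one constructs a weight $\psi$ (a smoothed distance-like function), and runs a twisted energy estimate of exactly the shape $\eqref{formlowerboundtwisted}$ but with the large parameter $\eps\sqrt\lambda$ replaced by $\eps\lambda$. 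The point of real-analyticity of $\phi$ (and $g$) is precisely that, because $\phi$ is multiplied by $\lambda$, one can afford a weight of size $\propto\lambda$ and the commutator error terms $[\square_\lambda, e^{\pm\eps\lambda\psi}]$ — which are $O(\lambda\cdot\eps\lambda) = O(\eps\lambda^2)$ — can be absorbed. Actually a slicker argument, and the one I'd write: holomorphically extend. With $\phi,g$ real-analytic, $\square_\lambda$ and any solution $u$ extend to a complex neighborhood; then deform the contour of integration in the energy identity (an almost-analytic / Sjöstrand "deformation" argument), which has the effect of shifting the weight $e^{-2\lambda\phi}$ by a term that gains $e^{-c\lambda|z-z'|^2}$-type factors, and conclude the pointwise bound by combining with the a priori polynomial bound of Lemma~\ref{lemma:three}.

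The cleanest concrete plan: (i) Fix $\tilde B\Subset B'\Subset B$ and a cutoff. By Lemma~\ref{lemma:three} (applied on $B'$ with a real-analytic, hence a fortiori $C^N$, metric) we have the crude bound $|u|+|\nabla u|\le \lambda^C\|u\|_{L^2(B)}$ on $B'$, so it suffices to upgrade the exponent. (ii) Write the representation $u(z) = \int_{B'} \Delta_\lambda(z,w)\,\chi\,u + \int \Delta_\lambda(z,w)[\Delta_\lambda,\chi]u$ — or equivalently exploit that $u$ equals its own "Bergman-type" reproduction off a slightly smaller ball — so that pointwise values of $u$ on $\tilde B$ are controlled by the kernel of $\Delta_\lambda^{-1}$ (on $B'$, with a boundary-condition of our choosing, e.g. Dirichlet, for which the gap $\eqref{formlowerbound}$ still holds) integrated against data supported near $\p B'$, at distance $\gtrsim\delta$ from $\tilde B$. (iii) Prove that this local inverse kernel is $O(e^{-c\lambda})$ at distance $\ge\delta$ from the diagonal; here is where real-analyticity enters, via the weighted estimate $\Re\langle e^{\eps\lambda\psi}\square_\lambda e^{-\eps\lambda\psi}v,v\rangle\ge (c-C\eps)\lambda\|v\|^2 - C\eps^2\lambda^2\|v\|^2$, valid because $\psi$ can be taken with $\|\psi\|_{C^2}=O(1)$; the $\eps^2\lambda^2$ term is dominated by... no — it is *not* dominated by $c\lambda$, which is the whole difficulty. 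The resolution: one must choose $\eps$ comparable to a small constant (not $\to 0$), and use the *analyticity* to get the weighted estimate to close with weight $e^{2\eps\lambda\psi}$ anyway — this is the analytic-hypoellipticity/Sjöstrand input, where the $O(\eps^2\lambda^2)$ error is absorbed into the *gain* in $e^{-2\lambda\phi}$ coming from the strict plurisubharmonicity of $\phi$, i.e. from $\phi(w) - \phi(z) - \Re(\text{holomorphic}) \gtrsim |z-w|^2$. Concretely this means: compare $\phi$ with its Taylor polynomial / with $\Re$ of the holomorphic extension of the $2$-jet, use strict positivity of the complex Hessian to get a quadratic lower bound, and this quadratic term beats the $O(\eps^2\lambda^2)$ commutator error once $|z-w|$ is bounded below.

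The main obstacle — and the step I'd expect to occupy most of the write-up — is exactly step (iii): making the weighted ("FBI/Agmon") energy estimate close with a weight of full size $e^{c\lambda}$, which is not a soft perturbation of $\eqref{formlowerbound}$ (the perturbation is the same order as the main term). This is where the real-analyticity of $\phi$ and $g$ is used essentially and cannot be replaced by finite smoothness; in practice one invokes Sjöstrand's theory of exponentially weighted estimates for elliptic operators with analytic coefficients (or the FBI-transform characterization of analytic wavefront set), applied to the $\lambda$-family $\square_\lambda$, and checks that all constants are uniform in $\lambda$ because the coefficients are $O(\lambda^2)$ and the principal symbol (after dividing by $\lambda$) is a fixed elliptic symbol perturbed by the analytic curvature term. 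I would cite \cite{boutetsjostrand} and \cite{christweighted} for the relevant machinery and present step (iii) as a known-type result, spelling out only the verification that the quadratic gain from $\partial\bar\partial\phi>0$ absorbs the commutator error, then assembling (i)–(iii) into the stated bound.
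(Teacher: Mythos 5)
There is a genuine gap, and it sits exactly at the step you yourself flag as the crux, your step (iii). Your reduction (i)--(ii) is fine: cutting off $u$ and writing $\chi u$ as the local (Dirichlet) inverse applied to $[\Delta_\lambda,\chi]u$, supported at distance $\ge\delta$ from $\tilde B$, correctly reduces the lemma to an $e^{-c\lambda}$ off-diagonal bound for a local Green kernel. But the mechanism you propose to prove that bound --- an Agmon-type estimate with weight $e^{\eps\lambda\psi}$ in which ``the $O(\eps^2\lambda^2)$ commutator error is absorbed into the gain coming from strict plurisubharmonicity of $\phi$'' --- does not close. The strict positivity of the complex Hessian is already fully spent in producing the spectral gap $\langle\Delta_\lambda u,u\rangle\ge c\lambda\norm{u}^2$; there is no additional $\lambda^2$-sized positivity available in the quadratic form, and analyticity of the coefficients does not shrink the commutator $[\Delta_\lambda,e^{\eps\lambda\psi}]$, whose size is determined by Leibniz's rule alone. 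The inequality $\phi(w)-\phi(z)-\Re(\mathrm{holomorphic})\gtrsim|z-w|^2$ produces pointwise Gaussian gains only when it multiplies something holomorphic (as for the reproducing kernel of $H^2_\lambda$); the columns of the Green kernel of $\Delta_\lambda$ are not holomorphic, so this gain cannot be inserted into the energy estimate. This is precisely why the soft weighted argument, at any finite smoothness, stalls at $e^{-c\sqrt\lambda}$. Invoking ``Sj\"ostrand's theory of exponentially weighted estimates'' as a black box is not a repair unless you identify a specific quantitative statement, uniform in $\lambda$, that applies to the family $\Delta_\lambda$; you do not, and the concrete verification you commit to spelling out is the step that fails.

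The paper's proof supplies the missing idea, which is of a different nature. One lifts $u$ to $U(z,t)=u(z)e^{i\lambda t}$ on $B\times\reals$, so that $\Delta_\lambda u=0$ becomes $\square_b U=0$ for the Kohn Laplacian of a strictly pseudoconvex CR structure; for $n>1$, quantitative analytic hypoellipticity of $\square_b$ (Tartakoff, Treves, Sj\"ostrand, with uniformity in the data) extends $U$ holomorphically, with bounds, to a fixed complex neighborhood of $\tilde B\times[-1,1]$; since the extension must have the product form $\tilde u(z)e^{i\lambda t}$, evaluating at $t=-i$ produces the factor $e^{-\lambda}$. That evaluation in a complexified auxiliary variable is the actual source of the exponential gain, and nothing in your outline plays this role. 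Note also that for $n=1$ the operator $\dbarb\dbarb^*$ is \emph{not} analytic hypoelliptic on a three-dimensional CR manifold, so the paper needs a separate microlocal argument giving the bound first for $\dbarlambdastar u$, followed by the recovery estimate of Lemma~\ref{lemma:recoveru}; your approach, as written, does not see this dichotomy.
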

Moreover, given a family of such metrics $g,\phi$, $c$ may be taken to be independent of $g,\phi$, 
provided that $g,\phi$ are uniformly $C^\omega$ and that the metrics $\phi$ are uniformly positive.

Positivity of $\phi$ means that in local coordinates, 
$\sum_{i,j=1}^n \frac{\partial^2 \phi(z)}{\partial z_i\partial \bar z_j}\zeta_i\bar\zeta_j
\ge a|\zeta|^2$ for all $\zeta\in\complex^n$ and all $z$,
for some $a>0$. We say that a family of metrics $\phi$ is uniformly positive
if $a$ is bounded below by some positive constant uniformly for all elements of the family in question.
Likewise we say that such a family is uniformly $C^\omega$ if there exists $C<\infty$ for which 
\begin{equation} \big|\frac{\partial^\alpha\phi}{\partial(z,\bar z)^\alpha}\big|
\le C^{1+|\alpha|} |\alpha|!\ \text{ uniformly on $B$}\end{equation} 
for every multi-index $\alpha$ and all metrics $\phi$.  The same applies to $g$.

\begin{proof}[Proof of Lemma~\ref{lemma:four}]
This is a consequence of a fundamental result on analytic hypoellipticity
of related subelliptic partial differential equations.
Consider first the case $n>1$. Work in $B\times\reals^1$
with coordinates $(z,t)$, and set $U(z,t) = u(z)e^{i\lambda t}$.
Then \[e^{i\lambda t}\dbarlambda u(z)  = \dbarb U(z,t),\]
where $\dbarb$ is a Cauchy-Riemann operator associated 
to a strictly pseudoconvex CR structure on $B\times\reals$;
$\Delta_\lambda$ is related to the Kohn Laplacian $\square_b$ for
this CR structure by the corresponding equation
\[e^{i\lambda t}\Delta_\lambda u(z) = \square_b U(z,t).\] 

For $n>1$, $\square_b$ is analytic hypoelliptic on $(0,1)$ forms,
for any $C^\omega$, strictly pseudoconvex CR structure.
Proofs of this and/or closely related results can be found in 
\cite{chinni},\cite{grigissjostrand},\cite{sjostrand2},\cite{tartakoff},\cite{treves2}.
Identifying $B\subset\complex^n$ with a ball in $\reals^{2n}$,
we regard $B\times\reals$ as a subset of $\reals^{2n+1}$, hence as
a totally real submanifold of $\complex^{2n+1}$. Any real analytic function
of $(z,t)\in B\times\reals$ thus extends holomorphically to a neighborhood in $\complex^{2n+1}$.

Analytic hypoellipticity of $\square_b$ implies such extension, in a
quantitative sense: there exist a complex neighborhood $\Omega$
of $\tilde B\times [-1,1]$ 
and a constant $C<\infty$ such that any bounded solution $U$ of 
$\square_b U=0$ in $B\times(-2,2)$ extends to a bounded holomorphic
function in $\Omega$, and moreover, 
\[\sup_\Omega|U|\le C\sup_{B\times(-2,2)} |U|.\]

By analytic continuation, any holomorphic extension of $u(z)e^{i\lambda t}$ must take the product form 
$\tilde u(z) e^{i\lambda t}$.  For positive $\lambda$ we then set $t=-i$ to deduce that
\[\sup_{\tilde B}|u| e^{\lambda}\le C\sup_B |u|.\]

An examination of any of the proofs 
\cite{sjostrand2, tartakoff, treves2}
of analytic hypoellipticity of $\square_b$ confirms that these provide {\em uniform} upper bounds,
given uniform upper bounds on the coefficients 
of $\dbarb$ in some fixed coordinate patch, and on the Hermitian metric used to define $\dbarb^*$, 
and given that the hypothesis of strict pseudoconvexity holds in a uniform
way. In our setting, the latter amounts to uniform strict positivity of the metric $\phi$.

The case $n=1$ requires an alternative treatment, because $\square_b =
\dbarb\dbarb^*$ fails to be analytic hypoelliptic for three-dimensional CR manifolds.
Instead, a variant of analytic hypoellipticity holds in two alternative (but
equivalent) forms. One of these\footnote{The other alternative asserts
that $u$ is $C^\omega$, microlocally outside a conic neighborhood of one
of the two ray bundles whose union is the characteristic variety of $\dbarb$.
This implies holomorphic extendibility to an appropriate wedge, and the
above reasoning may then be repeated to gain the factor $\exp(-c\lambda)$.}
asserts that if $\dbar\dbar^*U=0$ then 
\begin{equation}\sup_{\Omega}|\dbar^* U| \le C\sup_{B\times(-2,2)} (|U|+|\dbar^* U|),\end{equation} 
with the same type of uniform dependence of the constant $C$ on the data as for $n>1$.
Together with the reasoning above, this yields the conclusion
\begin{equation} \sup_{\tilde B}|\dbarlambdastar u|
\le e^{-c\lambda}\sup_B (|u|+ |\dbarlambdastar u|).\end{equation}

The bound for $u$ itself now follows from Lemma~\ref{lemma:recoveru} below.
\end{proof}

The justification of the above form of analytic hypoellipticity rests on several facts and results,
combined according to an outline introduced by Kohn \cite{kohn} for the analysis of related questions
concerning (weakly) pseudoconvex three-dimensional CR manifolds.
Denote by $\square=\dbar_b\dbar_b^*$ 
the Kohn Laplacian over a strictly pseudoconvex three (real) dimensional CR manifold $M$.
Assume that $\square u\in C^\omega$ in an open set.

\smallskip
\noindent
(i) The analytic wave front set of $u$ is contained in the characteristic variety of $\square$.
\newline
(ii) This characteristic variety is a real line bundle over $M$, thus a union of two ray bundles. 
\newline
(iii) In a conic neighborhood of one of these two ray bundles, $\dbar_b$ is of principal type
and satisfies (microlocally) the Poisson bracket hypothesis which ensures analytic hypoellipticity
\cite{treves1}, and therefore is microlocally analytic hypoelliptic. 
The microlocal version of this theorem of Treves follows for instance by the techniques in \cite{sjostrand1}.
Consequently since $\dbar_b(\dbar_b^* u)\in C^\omega$, the analytic wave front set of $\dbar_b^* u$
is disjoint from this ray bundle.
\newline
(iv) In a conic neighborhood of the complementary ray bundle, $\square$ has double characteristics and
satisfies the hypotheses of the theorem of Sj\"ostrand \cite{sjostrand2}; see also \cite{grigissjostrand}
where more degenerate operators are analyzed by the same techniques. 
Therefore  the analytic wave front set of $u$, and hence also the analytic wave front set of $\dbar_b^* u$, are
disjoint from this ray bundle.
\newline
(v) If a distribution has empty analytic wave front set, then it is analytic.
\newline
(vi) These steps can be made quantitative, where appropriate, to justify the stated uniformity.

\subsection{Exponential localization for a first-order equation}
\begin{lemma} \label{lemma:recoveru}
Let $n\ge 1$. Let $U,U'$ be open subsets of $X$ with $U\Subset U'$.
There exists $c>0$ such that for all sufficiently large $\lambda\ge 0$,
and all $u\in C^1(U')$,
\begin{equation*}
\norm{u}_{L^2(U)} \le C\norm{D_\lambda^* u}_{L^2(U')} 
+ Ce^{-c\lambda} \norm{u}_{L^2(U')}.
\end{equation*}
\end{lemma}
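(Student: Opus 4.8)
The plan is to recover a bound for $\|u\|_{L^2(U)}$ from control of $D_\lambda^* u$ by treating $\bar D_\lambda^*$ as a perturbation of the pointwise multiplication operator $\lambda\bar a\cdot$ (where $a=\bar\partial\phi$), which is invertible with $\|(\lambda\bar a)^{-1}\|\lesssim\lambda^{-1}$ because the positive curvature hypothesis forces $a=\bar\partial\phi$ to be nonvanishing — indeed $|a(z)|\ge c>0$ on compact subsets, since if $a$ vanished at a point then $\partial\bar\partial\phi$ could not be strictly positive there... actually more carefully: $\bar D_\lambda^*$ has the form $\vartheta + \lambda \bar a\,\lrcorner$ where $\vartheta$ is a fixed first-order operator with $C^\infty$ coefficients (the formal adjoint of $\bar\partial$ with respect to $g$) and $\bar a\,\lrcorner$ denotes contraction against the $(0,1)$-form $a$; on a $(0,1)$-form $u$, the zeroth order term is $\lambda\langle u, a\rangle$ up to smooth factors. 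The key algebraic point is that $|\partial\phi|$ is bounded below on $\overline U$ — this follows because $\phi$ is smooth and strictly plurisubharmonic, so $\nabla\phi$ can vanish only at isolated points, but in fact we may simply shrink $U'$ and choose the coordinate patch so that $\partial\phi/\partial z_j$ does not all vanish; since the statement is local and we have freedom in choosing $U\Subset U'$, this is arranged at the outset, or alternatively one absorbs the degenerate locus using that it has measure zero together with ellipticity — but the clean route is the lower bound $|a|\ge c_0>0$ on $\overline{U'}$.

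\smallskip

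The steps, in order, are as follows. First, write $\bar D_\lambda^* u = \lambda (\bar a\,\lrcorner\, u) + R u$ where $Ru = \bar D^* u$ is the untwisted formal adjoint, a first-order operator with $C^\infty$ coefficients independent of $\lambda$, so $\|Ru\|_{L^2(U)}\le C\|u\|_{C^1(U)}$ and more usefully $\|Ru\|_{L^2(U)} \le C\|u\|_{H^1(U'')}$ for an intermediate $U\Subset U''\Subset U'$. Second, using $|a|\ge c_0$, solve pointwise for $u$ in terms of $\bar a\lrcorner u$ and the component of $u$ orthogonal to $\bar a$; on a $(0,1)$-form the contraction $\bar a\lrcorner u$ sees only one component, so one actually needs to recover the full form. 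This is where the true mechanism enters: one does not recover $u$ from $\bar D_\lambda^* u$ alone at a point, but rather combines it with the equation $\bar D_\lambda u = 0$ (when $n=1$, $\Delta_\lambda = \bar D_\lambda \bar D_\lambda^*$, and $u$ solves $\Delta_\lambda u = 0$, so setting $v = \bar D_\lambda^* u$ one has $\bar D_\lambda v = 0$, i.e. $\bar\partial v = -\lambda a v$, and then... ). Actually for the lemma as stated $u$ is arbitrary $C^1$, so the mechanism must be purely the invertibility of the symbol: I would instead integrate by parts, $\lambda\int |a|^2|u|^2 \lesssim \Re\int \langle \bar D_\lambda^* u, \bar a\lrcorner u\rangle$ plus commutator and boundary terms, using a cutoff $\chi$ supported in $U'$ with $\chi\equiv 1$ on $U$; the commutator of $\bar D_\lambda^*$ with $\chi$ is order zero and $\lambda$-independent, and the cross terms involving $Ru$ are handled by Cauchy–Schwarz after an $\eps$-absorption, at the cost of a term $C\eps^{-1}\|u\|_{L^2(U')}^2$ which is $o(\lambda)\cdot\|u\|^2$, hence absorbable into the left side for large $\lambda$. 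This yields $\|u\|_{L^2(U)}^2 \le C\lambda^{-1}\|\bar D_\lambda^* u\|_{L^2(U')}^2 + C\lambda^{-1}\|u\|_{L^2(U')}^2 \le C\|\bar D_\lambda^* u\|_{L^2(U')}^2 + \tfrac12\|u\|_{L^2(U')}^2$, which is a weaker, non-exponential version.

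\smallskip

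To reach the exponential gain $e^{-c\lambda}$ in the error term, I would iterate the above on a nested sequence of shrinking domains $U = U_0 \Subset U_1 \Subset \cdots \Subset U_K = U'$ with $K\asymp\lambda$, so that at each stage the factor $\tfrac12$ (or better, any fixed $\theta<1$ obtainable by taking $\lambda$ large and the gap between $U_j$ and $U_{j+1}$ comparable to $1/K$) compounds; the first term $\|\bar D_\lambda^* u\|_{L^2(U')}$ only grows through the iteration by a geometric factor that is dominated, so one arrives at $\|u\|_{L^2(U)} \le C\|\bar D_\lambda^* u\|_{L^2(U')} + \theta^K\|u\|_{L^2(U')}$ with $\theta^K \le e^{-c\lambda}$. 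The bookkeeping of the $\eps$ and the domain-gaps so that $K$ can be taken proportional to $\lambda$ while all constants stay under control is the main obstacle; it is the familiar Morrey–Kohn–Hörmander absorption carried out with $\lambda$-dependent quantitative care, and the strict positivity of $\partial\bar\partial\phi$ (which guarantees $|\partial\phi|\gtrsim 1$ after the harmless local arrangement) is what makes the leading term coercive with the crucial factor of $\lambda$. I would then note that the case $n>1$ is identical since $\bar D_\lambda^*$ has the same structure, the contraction against $\bar a$ again being the dominant zeroth-order term.
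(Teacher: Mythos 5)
There is a genuine gap, and it sits at the foundation of your argument: the claim that positive curvature forces $a=\bar\partial\phi$ to be bounded below in modulus. Strict positivity of the complex Hessian $\partial\bar\partial\phi$ says nothing about $\partial\phi$: the model weight $\phi(z)=|z|^2$ has $\partial\phi=0$ at the origin, and $\phi(z)=|z|^2+\Re(z^2)=2x^2$ is strictly plurisubharmonic with an entire line of critical points, so neither ``vanishing only at isolated points'' nor your fallback of shrinking $U'$ or choosing coordinates is available (critical points of $\phi$ are unaffected by holomorphic coordinate changes, and $U\Subset U'$ are given). Worse, where the lemma is actually invoked (the $n=1$ branch of Lemma~\ref{lemma:four}, after the gauge change in the proof of Proposition~\ref{prop:neardiagonal}) the weight has been normalized so that its gradient vanishes at the pole, i.e.\ precisely inside the region of interest. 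Without $|a|\ge c_0$ the coercivity $\lambda^2\|\bar a\lrcorner u\|^2\gtrsim\lambda^2\|u\|^2$ driving your energy estimate collapses; and even granting it, for $n>1$ the contraction $\bar a\lrcorner u$ controls only one component of the $(0,1)$-form $u$, a point you raise but do not resolve (appealing to $\bar D_\lambda u=0$ is not allowed, since $u$ is an arbitrary $C^1$ section). Finally, the nested-domain iteration cannot reach $e^{-c\lambda}$: with $K$ shells the cutoff gradients have size $\asymp K$, so the commutator terms force $K\lesssim\sqrt\lambda$ for the absorption to close, and the compounded gain is at best $e^{-c\sqrt\lambda}$ --- exactly the weaker bound already discussed in the introduction, which is insufficient here.

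The paper's proof runs on a different mechanism, and that mechanism is where $e^{-c\lambda}$ really comes from. After removing the $\lambda$-independent zeroth order part of $\bar D_\lambda^*$ (solve $\partial\alpha=a$ and conjugate by $e^{\pm\alpha}$), one writes, via the second-order Taylor expansion at the observation point $z$, $\phi(w)=\psi(z,w)+\varphi(z,w)$ with $\varphi(z,\cdot)$ antiholomorphic and $\Re\big(\psi(z,w)-\psi(z,z)\big)\ge c|z-w|^2$; this quadratic lower bound is exactly where strict plurisubharmonicity enters. Then $\bar D_\lambda^*$ acts as $-e^{\lambda\psi(z,\cdot)}\,\partial\, e^{-\lambda\psi(z,\cdot)}$, and the Cauchy integral formula in the plane applied to $e^{-\lambda\psi(z,\cdot)}\eta u$ yields the pointwise representation $u(z)=-c_0\int v(w)(\bar z-\bar w)^{-1}e^{\lambda(\psi(z,z)-\psi(z,w))}\,dm(w)$ with $v=\eta\,\bar D_\lambda^*u-u\,\partial\eta$. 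The kernel $|z-w|^{-1}e^{-c\lambda|z-w|^2}$ handles the $\eta\,\bar D_\lambda^*u$ part uniformly in $\lambda$, while the $u\,\partial\eta$ part is supported at distance $\gtrsim 1$ from $z$, where the Gaussian factor contributes the $e^{-c\lambda}$. In short, the exponential gain is a consequence of the quadratic convexity of the weight (a Gaussian evaluated at unit distance), not of any pointwise lower bound on $\partial\phi$; your proposal would need this convexity mechanism, or an equivalent one, to close.
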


\begin{proof}
It suffices to show that for each $z_0\in U$, there exists a neighborhood $V$
of $z_0$ such that $\norm{u}_{L^2(V)}$ satisfies the required upper bound.
In a small open set, represent $\bar D_\lambda^*$
as $-e^{\lambda\phi}(\partial +a) e^{-\lambda\phi}$ where $a\in C^\infty$.
In a sufficiently small neighborhood it is possible to solve $\partial\alpha=a$ and thus to write
$\bar D_\lambda^*  = -e^{-\alpha} e^{\lambda\phi}\partial e^{-\lambda\phi}e^{\alpha}$.  
Since multiplication by $e^{\pm\alpha}$ preserves $L^2$ norms up
to a bounded factor, it suffices to prove the inequality with $\alpha\equiv 0$.

It is possible to write, for all $z,w$ in a sufficiently small neighborhood of $z_0$,
\begin{equation*}
\phi(w) = \psi(z,w) + \varphi(z,w)
\end{equation*}
where $\psi,\varphi$ are $C^\infty$ functions, $\varphi(z,w)$ is an antiholomorphic
function of $w$ for each $z$,
and 
\begin{equation} \Re(\psi(z,w))\ge \Re(\psi(z,z))+c|z-w|^2 \end{equation} 
for a certain constant $c>0$.
Such a decomposition is obtained by exploiting the Taylor series of order $2$ for $\phi$ at $z$.
Then for each $z$, when acting on functions of $w$,
\begin{equation*}
\bar D_\lambda^* u(w) = -e^{\lambda\psi(z,w)}\big(\partial e^{-\lambda\psi(z,\cdot)}\big)u(w).
\end{equation*}

Let $\eta\in C^\infty(X)$ be a function supported in a neighborhood
of $z_0$ which is contained in a coordinate patch contained in a relatively compact
subset of $U'$, within which the above expression for $\phi$ is valid; 
and $\eta$ is identically equal to one in a smaller neighborhood. 
Then $\eta u$ can be regarded as a function defined on $\complex^1$.
Let \begin{equation*} v=\bar D_\lambda^* (\eta u) = \eta \bar D_\lambda^* u - u\partial \eta.\end{equation*}
Since 
\begin{equation*} \partial_w e^{-\lambda\psi(z,w)}(\eta u)(w)=-e^{-\lambda\psi(z,w)} v(w) \end{equation*}
is a compactly supported continuous function defined on $\complex^1$,
for each $z$ sufficiently close to $z_0$ one may recover $\eta(z)u(z)=u(z)$ by 
\begin{equation}
u(z) = -c_0 \int_{\complex^1} v(w) (\bar z-\bar w)^{-1} e^{\lambda(\psi(z,z)-\psi(z,w))}\,dm(w)
\end{equation}
where $m$ denotes Lebesgue measure on $\complex^1$ and $c_0$ is a certain constant.
Now
\begin{equation*}
\big|e^{\lambda(\psi(z,z)-\psi(z,w))}\big|
 = e^{\lambda(\Re(\psi(z,z)-\psi(z,w)))}
\le e^{-c\lambda |w-z|^2}.
\end{equation*}
Therefore
\begin{align*}
|u(z)| 
&\le  C \int_{\complex^1}  |z-w|^{-1}|v(w)| e^{-c\lambda|z-w|^2} \,dm(w)
\\&\le  C \int_{\complex^1}  \big(|\eta(w)u(w)| + |u(w)\partial\eta(w)|\big)\,|z-w|^{-1} e^{-c\lambda|z-w|^2} \,dm(w)
\end{align*}
Since $|z-w|$ is bounded below by a positive quantity uniformly for all $z$ in $U$ and $w$ in the support of $\nabla\eta$,
the required bound follows.
\end{proof}

\section{Proof of Proposition~\ref{prop:neardiagonal}}

\subsection{Globalization}
We introduce a variant situation in which 
$X$ is replaced by $\complex^n$ and sections of $B^{(0,1)}\otimes L^\lambda$ over $X$ are replaced
by sections of $B^{(0,1)}(\complex^n)$ over $\complex^n$.
This variant will facilitate $\lambda$--dependent coordinate changes to be made below.

Let $\e>0$ be given. Let $U$ be a relatively compact open subset of a coordinate
patch in $X$.  Fix a holomorphic coordinate system on  that coordinate patch,
and express $\dbarlambda = e^{-\lambda\phi}
\dbar e^{\lambda\phi}$ where $\phi\in C^\infty$ is $\reals$-valued,
and satisfies the positivity hypothesis
\begin{equation} \left(\frac{\p^2\phi}{\p z_i \p \bar z_j}\right)_{i,j} \ge c(\delta_{i,j})_{i,j}\end{equation}
in the sense of Hermitian forms.

Sections of $L^\lambda$ over $U$ are thus identified with $\complex$--valued functions
in such a way that the $\lt$ norm squared, over $U$, of such a section can be expressed 
as $\int_U |f(z)|^2 a(z)\,d\mu(z)$ where $\mu$ is Lebesgue measure on $\complex^n$,
$a\in C^\infty(\complex^n)$ is bounded above in $C^N$ norm for all $N$ by constants 
independent of $\lambda,z'$, and $a(z)$ is positive and bounded below by a positive constant
independent of $\lambda,z,z'$.
Extend $a$ to a strictly positive $C^\infty$ function $\tilde a$ on $\complex^n$,
still with uniform upper and lower bounds.
Likewise extend $g$ to a $C^\infty$ Hermitian metric on $\complex^n$, independent of $\lambda$.
Assign to $(0,k)$ forms $f$ defined on $\complex^n$ the $\lt$ norm squared
$\int_{\complex^n} |f(z)|^2\tilde a(z)\,d\mu(z)$
where $|f(z)|$ is measured using this extension of $g$.

Fix an auxiliary function $\eta\in C^\infty_0(\complex^n)$, supported in $\set{z: |z|<4}$
and satisfying $\eta(z)\equiv 1$ for $|z|\le 2$.
For each $z'$ in a fixed relatively compact subset $U\Subset U'$, make the affine coordinate change
\[B\times U\owns (\zeta,z')\mapsto (z,z')=(z'+\zeta,z')\in U\times U,\] 
where $B$ is the ball of radius $\eps_0$ centered at $0\in\complex^n$.
In these coordinates, $z'$ is the origin, $\zeta=0$.
We will work in the variable $z\in B$, suppressing $z'$ in the notation;
all estimates will be uniform in $z'\in U$, as the proof will show.

Let $Q_2$ be the Taylor polynomial of degree $2$ for $\phi$ at $\zeta=0$.
Define
\begin{equation*}
\tilde \phi(\zeta) = Q_2(\zeta) + \eta(\eps_0^{-1} \zeta)(\phi(\zeta) -Q_2(\zeta)).
\end{equation*}
Consider the modified operator 
$e^{-\lambda\tilde \phi}\dbar_\zeta e^{\lambda\tilde \phi}$,
which agrees with $e^{-\lambda \phi}\dbar_\zeta e^{\lambda \phi}$ for all sufficiently small $\zeta$,
but has the advantage of being defined globally for $\zeta\in\complex^n$.
For sufficiently large $\lambda$,
\begin{equation*} \nabla^2 \tilde\phi(z)-\nabla^2\phi(0) = O(\eps_0)
\end{equation*}
uniformly for all $z\in\complex^n$. 
Therefore it is possible to choose $\eps_0>0$ sufficiently small that for all sufficiently large $\lambda$,
the quadratic form defined by $(\partial^2 \tilde\phi(z)/\partial z_i \partial \bar z_j)_{i,j=1}^n$ is bounded below
by a strictly positive constant, independent of $z$ and $\lambda$.
This holds uniformly in $z'\in U$.  Choose and fix such a value of $\eps_0$.

Consider the associated operator defined for $n>1$ by
\begin{equation*}
\tilde\Delta_\lambda
=
\big(e^{-\lambda\tilde \phi}\dbar e^{\lambda\tilde \phi}\big)
\big(e^{-\lambda\tilde \phi}\dbar e^{\lambda\tilde \phi}\big)^*
+
\big(e^{-\lambda\tilde \phi}\dbar e^{\lambda\tilde \phi}\big)^*
\big(e^{-\lambda\tilde \phi}\dbar e^{\lambda\tilde \phi}\big),
\end{equation*}
and for $n=1$ by
\begin{equation*}
\tilde\Delta_\lambda
=
\big(e^{-\lambda\tilde \phi}\dbar e^{\lambda\tilde \phi}\big)
\big(e^{-\lambda\tilde \phi}\dbar e^{\lambda\tilde \phi}\big)^*,
\end{equation*}
where adjoints are interpreted with respect to the Hilbert space
structure on $\lt(\complex^n)$ introduced above.

For $n>1$,
for all sufficiently large $\lambda$, 
a well-known computation based on integration by parts \cite{hormander} gives
\begin{equation} \label{quadformlowerbound}
\langle \tilde\Delta_\lambda u,u\rangle \ge c\lambda \norm{u}_{L^2}^2
\end{equation}
for all twice continuously differentiable and compactly supported $(0,1)$ forms $u$,
where $c>0$ is a positive constant.

For $n=1$, for all sufficiently large $\lambda$, 
\begin{equation} \big[e^{-\lambda\tilde \phi}\dbar e^{\lambda\tilde \phi},
\big(e^{-\lambda\tilde \phi}\dbar e^{\lambda\tilde \phi} \big)^*\big] \ge c\lambda I,\end{equation} 
in the sense of operators on $L^2(\complex^n)$ with respect to the same Hilbert space structure.
Consequently \eqref{quadformlowerbound} also holds for $n=1$.

Since
$\tilde\Delta_\lambda$ is a formally self-adjoint operator,
it follows that there exists a bounded linear operator $\tilde \scriptg_\lambda$ from $L^2(\complex^n, B^{(0,1)})$ to itself 
such that $\tilde\Delta_\lambda\circ \tilde \scriptg_\lambda$ is the identity operator on $L^2(\complex^n, B^{(0,1)})$,
and the operator norm of $\tilde \scriptg_\lambda$ is $O(\lambda^{-1})$ for all sufficiently large $\lambda$.

This inverse is bounded in $L^2$ operator norm, uniformly for all sufficiently large $\lambda$,
provided that $\eps_0$ is kept fixed.
Lemma~\ref{lemma:three} also applies to this situation, so
the distribution-kernel $\tilde \scriptg_\lambda(z,0)$
for $\tilde \scriptg_\lambda$ with pole at $\zeta=0$ satisfies
\begin{equation}
|\tilde \scriptg_\lambda(z,0)| \le
(\lambda + |z|^{-1})^C
\end{equation}
for all sufficiently large $\lambda$,
and the same holds for all of its partial derivatives.
These bounds are uniform in $\lambda$ provided that $\lambda$ is sufficiently large.

\subsection{Gauge change}

Denote by $p$ the harmonic part of the Taylor polynomial of $\tilde \phi$ of degree $2$ at $w=0$.  That is, expand
\begin{equation*}
\tilde\phi(z) 
= \tilde\phi(0) + \Re\big(\sum_{k=1}^n \alpha_k z_k + \sum_{i,j=1}^n \beta_{i,j} z_iz_j\big)
+ \sum_{i,j=1}^n \gamma_{i,j} z_i\bar z_j + O(|z|^3),
\end{equation*}
and set
\begin{equation*}
p(z) = \tilde\phi(0) + \Re\big(\sum_{k=1}^n \alpha_k z_k + \sum_{i,j=1}^n \beta_{i,j} z_iz_j\big).
\end{equation*}
Denote by
$\tilde p$ the real-valued harmonic conjugate of $p$, normalized to vanish at $0$.
Then
$[\dbar, e^{\lambda(p+i\tilde p)}] = \dbar(p+i\tilde p)\equiv 0$
and consequently
\begin{equation}
e^{-\lambda\tilde \phi} \dbar e^{\lambda\tilde \phi} 
= e^{i\lambda\tilde p} e^{-\lambda(\tilde \phi-p)} \dbar
e^{\lambda(\tilde \phi-p)} e^{-i\lambda\tilde p}.
\end{equation}
Likewise
\begin{align*}
\big(e^{-\lambda\tilde \phi} \dbar e^{\lambda\tilde \phi} \big)^*
= \big( e^{i\lambda\tilde p} e^{-\lambda(\tilde \phi-p)} \dbar
e^{\lambda(\tilde \phi-p)} e^{-i\lambda\tilde p} \big)^*
= e^{i\lambda\tilde p} \big( e^{-\lambda(\tilde \phi-p)} \dbar
e^{\lambda(\tilde \phi-p)}\big)^* e^{-i\lambda\tilde p}
\end{align*}
and consequently
\begin{multline*}
e^{-\lambda\tilde \phi} \dbar e^{\lambda\tilde \phi} 
\big(e^{-\lambda\tilde \phi} \dbar e^{\lambda\tilde \phi} \big)^*
+
\big(e^{-\lambda\tilde \phi} \dbar e^{\lambda\tilde \phi} \big)^*
e^{-\lambda\tilde \phi} \dbar e^{\lambda\tilde \phi} 
\\
= e^{i\lambda\tilde p} 
\Big(e^{-\lambda(\tilde \phi-p)} \dbar e^{\lambda(\tilde \phi-p)}
\big( e^{-\lambda(\tilde \phi-p)} \dbar e^{\lambda(\tilde \phi-p)}\big)^* 
\\
+
\big( e^{-\lambda(\tilde \phi-p)} \dbar e^{\lambda(\tilde \phi-p)}\big)^* 
e^{-\lambda(\tilde \phi-p)} \dbar e^{\lambda(\tilde \phi-p)}\Big)
e^{-i\lambda\tilde p}.
\end{multline*}

Hence upon replacement of $\tilde\phi$ by $\tilde\phi-p$ in the definition of $\tilde\square_\lambda$, 
a unitarily equivalent operator on $L^2(\complex^n,B^{(0,1)})$ is obtained.
Moreover, the absolute value of the distribution-kernel for the inverse
of this unitarily equivalent operator is identically equal to $|\tilde G_\lambda|$.

In deriving upper bounds for $|G_\lambda(z,w)|$,
where $G_\lambda$ is the distribution-kernel for $\square_\lambda^{-1}$ on $X$, 
we may therefore assume without loss of generality that the harmonic part of the
Taylor polynomial of degree $2$ for $\phi$ at $w$ vanishes identically.
Likewise, because $\dbar_\lambda$ and $\dbar_\lambda^*$ have been conjugated
by the unitary multiplicative factor $e^{i\tilde p}$, the same assumption can be made
when deriving upper bounds for $|\dbar_\lambda G_\lambda(z,w)|$ and $|\dbar_\lambda^* G_\lambda(z,w)|$.

\subsection{Taylor expansion and dilation}\label{subsect:Taylor}

Let $\tilde\phi$ be as above, and suppose, as we may achieve through a gauge change, that 
the harmonic portion of the Taylor polynomial of degree $2$ for $\tilde\phi$ at $0$
vanishes identically, while the complex Hessian matrix of $\tilde\phi$ is bounded below
by a strictly positive constant, and all partial derivatives of $\tilde\phi$ are bounded
above, uniformly in $\lambda$.

Let $N$ be a large positive integer, independent of $\lambda$, to be chosen below.
Define $P_N$ to be the Taylor polynomial of degree $N$ for $\tilde\varphi$, at $\zeta=0$.
For any $r>0$ satisfying $\lambda^{-1/2}\le r\le \lambda^{-1/4}$
define
\begin{equation} \psi(z) = r^{-2} P_N(rz) + r^{-2}(1-\eta(z))(P_2(rz)-P_N(rz)).  \end{equation}
For all sufficiently large $\lambda$,
the complex Hessian of $\psi$ evaluated at an arbitrary point $z\in\complex^n$,
equals the complex Hessian of $\tilde\phi$ evaluated at $0$, plus $O(r)= O(\lambda^{-1/4})$.

Moreover on $\set{z: |z|<3}$, where $1-\eta\equiv 0$,
$\psi$ is real analytic, {\em uniformly in $\lambda$ and in  $N$} 
provided that $\lambda\ge\Lambda(N)$ where $\Lambda(N)$ is some
appropriately large quantity depending only on $N$ and the data $X,L,\phi,g$.
This uniformity, which is crucial to our analysis,
is a consequence of the normalizations $\tilde\phi(0)=0$, $\nabla\phi(0)=0$ achieved by
subtracting the degree one Taylor polynomial\footnote{
Subtraction of the harmonic second degree terms is natural, but inessential here.}
of $\tilde\phi$; indeed, for $z$ in any bounded set and $N\ge 2$, $P_N(rz)=P_2(rz) + O(r^{3}|z|)$ 
so that \[r^{-2}P_N(rz)=P_2(z) +O_{M,N}(r)\]  in any $C^M$ norm on any bounded set. 
Once $M,N$ are chosen, the term $O_{M,N}(r)$ becomes arbitrarily small as $\lambda$
becomes arbitrarily large.

Define a globalized locally analytic approximation $g^\dagger$ to the Hermitian metric $g$ by
\begin{equation*} g^\dagger(z)  = P_N(rz) + (1-\eta(z))(g(0)-P_N(rz)) \end{equation*}
where now $P_N$ is the Taylor polynomial of degree $N$ for $g$ at $0$, in the natural sense.

Define
\begin{equation} \kappa = r^2\lambda\end{equation}
and
\begin{equation} \bar \scriptd = e^{-\kappa\psi}\bar\partial e^{\kappa\psi}, \end{equation}
that is, $\bar\scriptd u = e^{-\kappa\psi} \bar\partial(e^{\kappa\psi}u)$, for $(0,q)$ forms $u$ defined on $\complex^n$.
Define $\bar\scriptd^*$ to be the adjoint of $\bar\scriptd$ with respect to the 
Hilbert space structures on $L^2$ sections of $B^{(0,q)}(\complex^n)$ specified by $g^\dagger(z)$.
Define
\begin{equation*} \square^\dagger = 
\begin{cases} \ \bar\scriptd\bar\scriptd^* + \bar\scriptd^*\bar\scriptd \ \ &\text{for $n>1$,}
\\ \ \bar\scriptd\bar\scriptd^* &\text{for $n=1$.}
\end{cases} \end{equation*}

These are differential operators.  On the region $|z|<4$,
$\square^\dagger$ is related to $\square_\lambda$ as follows: If $u(z)=v(rz)$ then
\begin{equation} \label{dilationtwining}
\square^\dagger u(z) = r^2\square_\lambda v(rz) 
+ O(\lambda^{-cN})\,
O(v,\dbar_\lambda v,\dbar_\lambda^* v, \dbar_\lambda(bv), \dbar_\lambda^*(bv))
\end{equation}
where the error term denoted
$O(v,\dbar_\lambda v,\dbar_\lambda^* v, \dbar_\lambda(bv), \dbar_\lambda^*(bv))$
is a linear combination of $v$, $\dbar_\lambda(v)$, $\dbar_\lambda^*(v)$,
$\dbar_\lambda(bv)$ and $\dbar_\lambda^*(bv)$ where all coefficients are bounded uniformly
in $\lambda,z$, and 
$bv$ denotes either the wedge product or the interior product of $v$ with a real analytic $(0,1)$ form $b$. 
Moreover, in this region, these forms $b$ are uniformly analytic as $\lambda\to\infty$.


Applying \eqref{dilationtwining} with \[u(z) = \scriptg_\lambda(rz,0),\]
using the upper bounds $|\scriptg_\lambda(z,0)| \le C\lambda^C$ and 
$|\dbar_\lambda \scriptg_\lambda(z,0)| + |\dbar_\lambda^* \scriptg_\lambda(z,0)| \le C\lambda^C$
for $|z|\ge \lambda^{-1/2}$,
and using the assumption $\lambda^{-1/2}\le r\le \lambda^{-1/4}$, we conclude that  
\begin{equation*} |\square^\dagger u(z)| \le  \lambda^{-cN} \end{equation*}
for $\tfrac12\le|z|\le 2$,
where $c>0$ is independent of $\lambda,z$ and of $N$, provided that $\lambda\ge \Lambda(N)$.

Provided that $\kappa=r^2\lambda$ is sufficiently large, the standard integration by parts calculation 
together with the uniform lower bound for the complex Hessian of $\psi$ give the lower bound
\begin{equation}\label{kappahessianbound} \langle \square^\dagger u,u\rangle  \ge c\kappa \norm{u}_{L^2}^2 \end{equation}
for all $C^2$ forms $u$ of bidegree $(0,1)$ with compact support. 
The effect of the localization and rescaling has been to replace $\lambda$ by $\kappa$.

\subsection{Conclusion of Proof of Proposition~\ref{prop:neardiagonal}}
Let $N$ be a large positive integer.
Suppose that $\lambda$ is large, that $\lambda^{-1/2}\le r\le \lambda^{-1/4}$, and that $\kappa=r^2\lambda$ is large.
Consider $u(z) = \scriptg_\lambda(rz,0)$, defined as above using Taylor polynomials of order $N$.
In the annular region $\tfrac12<|z|<2$,
$|u|\le\lambda^C$ and $|\square^\dagger u| \le\lambda^{-cN}$, provided that $\lambda\ge\Lambda(N)$.

Let $\tilde\eta$ be a $C^\infty$ function which is identically equal to $1$ in $\set{z: \tfrac13\le|z|\le 3}$
and supported in $\set{z: \tfrac14<|z|<4}$.
Provided that $\kappa$ is sufficiently large, the global lower bound \eqref{kappahessianbound} ensures
that the equation $\square^\dagger v=\tilde\eta \square^\dagger u$ is solvable in $L^2(\complex^n)$,
and that there exists a solution satisfying 
\begin{equation} \norm{v}_{L^2}\le C\kappa^{-1}\norm{\tilde\eta \square^\dagger u}_{L^2} \le\lambda^{-cN},
\end{equation}
provided that $\lambda\ge \Lambda(N)$.

Now $\square^\dagger(u-v)\equiv 0$ where $\tfrac12<|z|<2$, so Lemma~\ref{lemma:four} can be applied to conclude that
\begin{equation} |(u-v)(z)|\le C e^{-c\kappa} = Ce^{-cr^2\lambda} \ \text{ for $\tfrac34\le|z|\le\tfrac43$.} \end{equation}
Therefore in this same region,
\begin{equation} |\scriptg_\lambda(rz,0)|\le  Ce^{-cr^2\lambda} + C\lambda^{-cN} \end{equation}
for all $\lambda\ge\Lambda(N)$.

Equivalently, by choosing $r = |z|^{-1}$, we find that there exists a constant $B<\infty$ 
such that for all $\lambda\ge\Lambda(N)$ and all $|\zeta|\ge B\lambda^{-1/2}$,
\begin{equation} |\scriptg_\lambda(\zeta,0)|\le  Ce^{-c\lambda|\zeta|^2} + C\lambda^{-cN}
= Ce^{-c\lambda|\zeta|^2} + Ce^{-cN\log\lambda}. \end{equation}

If $A_0$ is sufficiently large, if $A<\infty$ is fixed, and if 
$A_0\lambda^{-1/2}\sqrt{\log\lambda}\le|\zeta|\le A\lambda^{-1/2}\sqrt{\log\lambda}$, 
choose $N = A^2$ to obtain
\begin{equation} |\scriptg_\lambda(\zeta,0)|\le  Ce^{-c\lambda|\zeta|^2}. \end{equation}
After reversing the change of variables made above, this is the desired bound
$|\scriptg_\lambda(z,z')|\le  Ce^{-c\lambda \rho(z,z')^2}$. 

This analysis cannot be extended to a larger range of $|\zeta|$, because bounds only hold for
$\lambda\ge \Lambda(N)$ and a larger range would require that $N$ depend on $|\zeta|$,
hence that $N$ depend on $\lambda$, introducing circularity into the reasoning.

Since $\scriptg_\lambda(z,z')$ is a solution on the complement of the diagonal $z=z'$ 
of homogeneous elliptic linear partial differential equations,
separately with respect to each of the two variables $z,z'$, 
and since the coefficients of these equations are $O(\lambda^2)$ in any $C^M$ norm,
it follows from routine bootstrapping arguments that each derivative
of $\scriptg_\lambda$ satisfies the same upper bound with a possibly smaller value of the constant $c>0$.
Each of the finitely many steps in the bootstrapping process loses at most a factor of $C\lambda^2$. 
Since \[\lambda^C e^{-A\sqrt{\lambda\log\lambda}} \le e^{-(A-1)\sqrt{\lambda\log\lambda}}\]
for all sufficiently large $\lambda$, the loss of finitely many such factors is of no importance here.  \qed


\end{document}